\newtheorem{theorem}{Theorem}[section]
\newtheorem{lemma}[theorem]{Lemma}
\newtheorem{corollary}[theorem]{Corollary}
\newtheorem{remark}[theorem]{Remark}
\numberwithin{equation}{section}
\begin{document}

\title[Deviation of polynomials from their expectations and isoperimetry]
{Deviation of polynomials from their expectations and isoperimetry}

\author[L.M. Arutyunyan]{Lavrentin M. Arutyunyan}
\address{Faculty of Mechanics and Mathematics,
Moscow State University, Moscow, 119991 Russia}
\curraddr{}
\email{lavrentin@yandex.ru}
\thanks{This work has been supported by the Russian Science Foundation Grant 14-11-00196 at
Lomonosov Moscow State University.}

\author[E.D. Kosov]{Egor~D.~Kosov}
\address{Faculty of Mechanics and Mathematics,
Moscow State University, Moscow, 119991 Russia}
\curraddr{}
\email{ked\_2006@mail.ru}
\thanks{}

\subjclass[2010]{60E05, 60E15, 52A20, 28C20}

\keywords{Logarithmically concave measure, Gaussian measure, distribution of a polynomial, Carbery--Wright inequality, 
isoperimetric inequality, Poincar\'e inequality}

\date{}

\begin{abstract}
In the first part we study deviation of a polynomial from its mathematical expectation. This deviation
can be estimated from above by Carbery--Wright inequality, so we investigate estimates of the deviation from below.
We obtain such estimates in two different cases: for Gaussian measures and a polynomial of an arbitrary degree and
for an arbitrary log-concave measure but only for polynomials of the second degree.
In the second part we deals with isoperimetric inequality and the Poincar\'e inequality
for probability measures on the real line
that are images of the uniform distributions on convex compact sets in
$\mathbb{R}^n$ under polynomial mappings.
\end{abstract}

\maketitle

\vskip .1in

Polynomials on spaces with log-concave measures possess a number of important and useful properties.
These properties have been studied in many works, see for example \cite{ArutKos, BobkPoly, Bobk, Bourg, NSV}.
Some authors (see \cite{BKNP, BZ, NNP, NP}) also consider the special case of Gaussian measures.

Results about polynomial distributions find many applications in various fields.
One of such applications is concerned with
geometrical properties of convex bodies, especially
when the dimension tends to infinity. For example, Bourgain \cite{Bourg} proved an upper bound
in the hyperplane conjecture
by using a Khinchin-type inequality for polynomials of a fixed degree on convex bodies
(about this inequality see also \cite{BobkPoly, Bobk}). It should be remarked that the best known upper bound
in the hyperplane conjecture is due to Klartag (see \cite{Klartag}).
On the other hand, properties of polynomials and polynomial distributions play an important role
in probabilistic questions. In particular, Gaussian measures are also logarithmically concave
and there are certain properties of Gaussian measures that were proved only in the framework
of general logarithmically concave measures.
One of them is the following Carbery--Wright inequality
that holds for any polynomial $f$ of degree $d$ and any
log-concave measure $\mu$ on $\mathbb{R}^n$ (see \cite{CarWr}):
$$
\|f\|^{1/d}_{L^1(\mu)}\mu(x\colon\ |f(x)|\le\alpha)\le C(d)\alpha^{1/d}.
$$
This inequality has already found interesting applications in probability theory
(see, for example, \cite{NNP, NP}).

In the first part of this work we discuss inequalities that are
reverse in some sense to the Carbery--Wright inequality,
more precisely, inequalities of the form
$$
\mu(|f-m_f|\le \sigma_f s)\ge C(d)\varphi(s),
$$
where $f$ is a polynomial of degree $d$ and $m_f$ and $\sigma_f^2$
are its expectation and variance, respectively,
and $s\in[0, 1/2]$.
We prove inequalities of such a type in two different cases.
In the first case, the measure $\mu$ is Gaussian and $\varphi(s)=s |\ln s|^{-d/2}$
(see Theorem \ref{t2.1}).
In the second case, the measure $\mu$ is an arbitrary log-concave measure and
the degree of the polynomial $f$ is at most two while $\varphi(s)=s$ (see Corollary \ref{c2.2}).
The main feature of our inequalities is their independence of the dimension of the space
and of the measure $\mu$ itself.

The second part of our work is devoted to the isoperimetric inequality and the Poincar\'e inequality
for distributions of polynomials.
It is well-known that for the standard Gaussian measure on $\mathbb{R}^n$
both inequalities hold true
(see \cite{Gaus, STs, Bor3}).
For log-concave measures, inequalities of these types have been
studied in \cite{BobkIsop, KLS}.
In our work, these inequalities are proved for probability measures on the real line
that are polynomial images of the uniform distributions on convex compact sets in
$\mathbb{R}^n$ (see Theorem \ref{t3.1} and Corollaries \ref{c3.2} and \ref{c3.3}).

One of the main tools used in this paper is the
so-called localization technique (see \cite{KLS, LS}).
The idea of localization of a problem
was used in many papers as an approach for obtaining estimates in multidimensional spaces.
For example, it was used to study isoperimetric inequalities for the uniform distributions on
convex bodies in \cite{KLS} and for the distributions on spheres in \cite{GrM1}. Also it was used in
\cite{Bobk, BobkPoly} in the proof of Khinchine-type inequalities for polynomials.
This technique allows to reduce some multidimensional inequalities to one-dimensional ones.
In the case of polynomials it is especially convenient, because a restriction of a polynomial to a
straight line is again a polynomial. A new approach to the ideas of localization
was developed in \cite{FrGue},
where localization is interpreted as a property
of extreme points of some special convex sets in the space of all probability measures.

\section{Preliminaries}

First of all we introduce  some notation and mention ceratin
previously known results used in our work.

Let $\mu$ be a probability Borel measure on $\mathbb{R}^n$ and
let $f$ be a $\mu$-measurable function.
We use the following notation:
$$
\mu_f=\mu\circ f^{-1} \text{ is the image of the measure } \mu \text{ under the mapping } f,
$$
$$
m_f=\int fd\mu \text{ is the expectation of the random variable } f,
$$
$$
\sigma_f^2=\int(f-m_f)^2d\mu \text{ is the variance of the random variable } f,
$$
$$
\alpha_f=\int|f-m_f|d\mu,
$$
$$
\|f\|_{p}=\biggl(\int |f|^p d\mu\biggr)^{1/p} \text{for} \ p>0, \quad \|f\|_0=\exp\biggl(\int \ln|f|d\mu\biggr)=\lim\limits_{r\to 0} \|f\|_r.
$$

Let $I_A$ denote the indicator function of a set $A$.

A probability Borel measure $\mu$ on $\mathbb{R}^n$ is called logarithmically concave
(also log-concave or convex) if
it has a density of the form $e^{-V}$ with respect to Lebesgue measure on some affine subspace,
where $V$ is a convex function (possibly with infinite values) on this subspace (see~\cite{DiffMeas}).
This properety is equivalent (see \cite{Bor, Bor2}) to the property that for every pair
of Borel sets $A,B$ one has
$$
    \mu(t A + (1-t)B) \ge \mu(A)^{t}\mu(B)^{1-t}, \ \forall\ t\in [0,1].
$$

A polynomial of degree $d$ is a function
$f$ on $\mathbb{R}^n$ of the form
$$
    f(x)=\sum_{m=0}^d B_m(x,\ldots,x),
$$
where $B(x_1,\ldots,x_m)$ is a symmetric  $m$-linear function.

Let $\nu$ be a probability Borel measure on the real line.
Define the $\nu$-perimeter of a set $A$ by the following formula:
$$
\nu^+(A)=\liminf_{\varepsilon\to0}\frac{\nu(A+(-\varepsilon, \varepsilon))-\nu(A)}{\varepsilon}.
$$

\vskip .1in

The proofs of our main results use the following known facts.

\begin{theorem}[see \cite{NSV}]\label{t1.1}
Let $\mu$ be a log-concave measure on $\mathbb{R}^n$ and let $f$
be a polynomial of degree $d$. Set $k(f)=\inf\{k\colon\ \mu(\{|f|\ge k\})\le1/e\}$. Then
for every $t\ge1$ one has
$$
    \mu(x\colon\ |f(x)|\ge (4t)^dk(f))\le e^{-t}.
$$
\end{theorem}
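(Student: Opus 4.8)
The plan is to reduce the inequality to a one-dimensional statement by the localization technique and then to settle the one-dimensional case by a Remez-type inequality for polynomials. The factor $4$ in $(4t)^{d}$ is not accidental: it is exactly the factor produced by the Remez (Brudnyi--Ganzburg) inequality through the elementary bound $T_{d}(y)\le(2y)^{d}$, $y\ge1$, for the Chebyshev polynomials.

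We may assume that $f$ is nonconstant, so that $k(f)>0$ (the cases $k(f)=0$ or $\deg f=0$ are degenerate). Since $x\mapsto\mu(\{|f|\ge x\})$ is nonincreasing, the definition of $k(f)$ as an infimum yields $\mu(\{|f|>k(f)\})\le1/e$; replacing $f$ by $f/k(f)$, it therefore suffices to prove: \emph{if $\mu$ is a log-concave probability measure on $\mathbb{R}^{n}$, $g$ is a polynomial with $\deg g\le d$, and $\mu(\{|g|\le1\})\ge1-1/e$, then $\mu(\{|g|\ge(4t)^{d}\})\le e^{-t}$ for all $t\ge1$.} Both sides here are bounds for integrals of indicator functions against $\mu$, so this is the kind of statement to which localization applies. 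If it were false, there would be a log-concave $\mu$ with $\int(I_{\{|g|\le1\}}-(1-1/e))\,d\mu\ge0$ and $\int(I_{\{|g|\ge(4t)^{d}\}}-e^{-t})\,d\mu>0$, and then, by the geometric Kannan--Lov\'asz--Simonovits (Lov\'asz--Simonovits) localization lemma, see \cite{KLS,LS}, or by its extreme-point reformulation in \cite{FrGue}, the same two inequalities would hold for a ``needle'': a probability measure $\nu$ supported on a segment of a line $\ell\subset\mathbb{R}^{n}$ with density proportional to $e^{cx}$ in an affine parameter $x$ on $\ell$ (a point mass being the degenerate case). A point mass is impossible: the first inequality forces $|g|\le1$ at that point, and then the second integral equals $-e^{-t}<0$, contradicting the strict inequality. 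Since the restriction of $g$ to $\ell$ is again a polynomial of degree at most $d$, everything is reduced to the one-dimensional claim: \emph{if $\nu$ is a probability measure on an interval $J$ with density proportional to $e^{cx}$, $P$ is a univariate polynomial with $\deg P\le d$, and $\nu(\{|P|\le1\})\ge1-1/e$, then $\nu(\{|P|\ge(4t)^{d}\})\le e^{-t}$ for all $t\ge1$.}

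After an affine change of the variable $x$ (which changes neither $\deg P$ nor the log-concavity of $\nu$) we take $J=[0,1]$. The main tool is the Remez--Brudnyi--Ganzburg inequality: for $\deg P\le d$, an interval $I$ and a measurable set $S\subset I$ with $|S|=\theta|I|$ one has $\sup_{I}|P|\le T_{d}\bigl(\tfrac{2}{\theta}-1\bigr)\sup_{S}|P|\le(4/\theta)^{d}\sup_{S}|P|$. Applied with $I=[0,1]$ and $S=\{x\in[0,1]\colon|P(x)|\le1\}$, and combined with a comparison of $\nu$ with Lebesgue measure (valid by log-concavity) that converts $\nu(S)\ge1-1/e$ into control of $|S|$, this already pushes $\sup_{[0,1]}|P|$ below $(4t)^{d}$ — whence $\nu(\{|P|\ge(4t)^{d}\})=0$ — as soon as $t$ exceeds a certain absolute constant. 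For the remaining bounded range of $t$ one must argue more carefully, bounding the ``passage'' set $\{1<|P|<(4t)^{d}\}$ from below by Markov-type and Remez-type inequalities for univariate polynomials; and when the weight $e^{cx}$ is far from constant one first localizes once more to a sub-segment of $[0,1]$ on which $e^{cx}$ varies by a bounded factor and which still carries almost all the $\nu$-mass of both $\{|P|\le1\}$ and $\{|P|\ge(4t)^{d}\}$, so that there $\nu$ is comparable to the uniform measure and the Remez step runs with dimension-free constants.

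The genuinely delicate point is exactly this near-critical regime together with the control of the weight after localization. Indeed, the inclusion $\{|g|\ge(4t)^{d}\}\subset\{|g|>1\}$ already gives $\mu(\{|g|\ge(4t)^{d}\})\le1/e=e^{-1}$, so at $t=1$ the inequality is immediate and the whole content of the theorem is to gain the extra factor $e^{-(t-1)}$ — uniformly in $t\ge1$, in the dimension $n$, and, after the reduction, in a needle $\nu$ that may be extremely lopsided — while keeping the constant $4$ intact. Making this gain quantitative through the Remez calculus for univariate polynomials, with constants that survive the localization, is the heart of the argument.
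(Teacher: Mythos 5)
The paper itself gives no proof of this statement: Theorem \ref{t1.1} is quoted from \cite{NSV}, and the route you sketch (localization to ``needles'' with log-affine density, then a Remez/Chebyshev analysis on the line) is indeed the route of that reference. The reductions you actually carry out are essentially sound: the normalization to $\mu(\{|g|\le 1\})\ge 1-1/e$ with $g=f/k(f)$, the passage to needles (modulo the routine point that $I_{\{|g|\le1\}}$ and $I_{\{|g|\ge(4t)^d\}}$ are indicators of closed sets, hence upper semi-continuous, so to meet the semicontinuity hypotheses of the localization lemma you should first pass to slightly enlarged open sets and take a limit), and the remark that a point mass cannot be a counterexample.

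The genuine gap is exactly where you write that ``one must argue more carefully'': the one-dimensional claim for $\nu\propto e^{cx}dx$ on an interval, uniformly in $c$, in $t\ge1$ and with the constant $4$, \emph{is} the content of the theorem, and it is not proved. Moreover, the patch you propose for a strongly non-constant weight --- localizing to a sub-segment on which $e^{cx}$ varies by a bounded factor and which ``carries almost all the $\nu$-mass of both $\{|P|\le1\}$ and $\{|P|\ge(4t)^d\}$'' --- cannot deliver the estimate. First, for an upper bound on $\nu(\{|P|\ge(4t)^d\})$ it does not help that a sub-segment captures most of that set's mass; smallness of that mass is what has to be proved. Second, and structurally: when $|c|$ is large, a bounded-weight-ratio sub-segment has length $O(1/|c|)$, while the gain $e^{-t}$ comes from the decay of the weight across distances of order $t/|c|$, i.e.\ across many such segments; any sub-segment reaching from the bulk of $\{|P|\le1\}$ to the region where $|P|\ge(4t)^d$ has weight ratio $e^{\Theta(t)}$, not $O(1)$. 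The limiting example $\nu(dx)=e^{-x}I_{[0,\infty)}(x)\,dx$, $P(x)=x^d$ (where $k(P)=1$ and $\nu(|P|\ge(4t)^d)=e^{-4t}$) shows that no comparison with a uniform measure on a bounded-ratio segment can produce the uniform exponential decay in $t$. A correct completion must couple the Remez/Chebyshev growth bound with the exponential weight itself, for instance by showing that the sublevel set $\{|P|\le1\}$ (a union of at most $d$ intervals, one of which carries a fixed share of the mass) dilated by a factor comparable to $t$ about a suitable point stays inside $\{|P|\le(4t)^d\}$, while the $\nu$-measure of the complement of the dilated interval is at most $e^{-t}$ by a one-dimensional Borell-type dilation estimate for log-concave measures; this computation, where the constant $4$ and the uniformity in $c$, $t$ and the dimension are actually earned, is precisely what your proposal postpones.
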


\begin{theorem}[see \cite{BobkPoly, Bobk}]\label{t1.2}
Let $\mu$ be a log-concave measure on $\mathbb{R}^n$, $q\ge1$. Then there
is an absolute constant $c$ such that for every
polynomial $f$ of degree $d$ the following inequalities hold true:
$$
\|f\|_q\le (cqd)^d\|f\|_0, \quad \|f\|_q\le (cq)^d\|f\|_1.
$$
\end{theorem}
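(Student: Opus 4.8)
\medskip

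The plan is to reduce everything, by the localization technique, to one-dimensional estimates for polynomials against log-concave (in fact log-affine) measures on a segment, where one can exploit the rigidity of degree-$d$ polynomials. It is convenient to isolate the \emph{master reverse H\"older inequality}
\[
\|f\|_q\le (cq/p)^d\|f\|_p,\qquad 0<p\le q,
\]
valid for every log-concave $\mu$ and every polynomial $f$ of degree at most $d$. The second inequality of the theorem is the case $p=1$; the first follows from the case $p=1/d$ (which gives $\|f\|_q\le(cqd)^d\|f\|_{1/d}$) combined with a small-ball bound of Carbery--Wright type, namely $\|f\|_{1/d}\le C^d\|f\|_0$ with $C$ absolute, which is proved by the same method. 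So I would concentrate on the master inequality.

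To prove it I would argue by contradiction. After rescaling so that $\|f\|_{L^p(\mu)}\le1$, suppose $\int|f|^q\,d\mu>(cq/p)^{dq}$ with $c$ to be fixed below. Apply the Lov\'asz--Simonovits localization lemma \cite{LS} to the functions $g_1=|f|^q-(cq/p)^{dq}$ and $g_2=1-|f|^p$: since $\int g_1\,d\mu>0$ and $\int g_2\,d\mu\ge0$, one obtains a probability measure $\nu$ with a log-affine density on a segment (the extremal case includes, as limits, the uniform distribution on a bounded interval and the one-sided exponential distribution) such that $\int|f|^q\,d\nu>(cq/p)^{dq}$ and $\int|f|^p\,d\nu\le1$. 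The restriction $P$ of $f$ to the line containing that segment is again a polynomial of degree at most $d$, so it suffices to prove $\|P\|_{L^q(\nu)}\le(cq/p)^d\|P\|_{L^p(\nu)}$ for these model measures. For a uniform distribution on a bounded interval the interval is compact, hence $\|P\|_{L^q(\nu)}\le\|P\|_\infty$, and a Remez/Nikolskii-type inequality together with the Mahler-measure comparison $\|P\|_\infty\le C^d\|P\|_{0}$ (with $C$ absolute) gives $\|P\|_{L^q(\nu)}\le C^d\|P\|_{0}\le C^d\|P\|_{L^p(\nu)}$; since $q/p\ge1$ this is already what we need. For a one-sided exponential distribution $P$ grows at most polynomially while the measure has an exponential tail, so the ratio of the $L^q$- and $L^p$-norms is controlled by $\Gamma(dq+1)^{1/q}/\Gamma(dp+1)^{1/p}$, and the required factor $(q/p)^d$ comes out of Stirling's formula; the coefficient-wise comparison needed to return to $\|P\|_{L^p(\nu)}$ is supplied by a compactness argument on the finite-dimensional sphere of normalized polynomials. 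Transporting the model estimates back through the localization lemma yields the master inequality, hence the theorem.

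The step I expect to be the main obstacle is extracting the \emph{sharp} exponent $d$ with an \emph{absolute} base constant in the one-dimensional model cases. Soft arguments — e.g.\ integrating the sub-exponential tail bound of Theorem \ref{t1.1}, which gives only $\|f\|_q\le(cqd)^d k(f)$ together with $k(f)\le e\|f\|_1$, hence the weaker $\|f\|_q\le(cqd)^d\|f\|_1$ — systematically lose a factor of order $d^{d}$, and removing it forces one to use the precise polynomial inequalities on an interval (Remez, Nikolskii, Mahler measure) and the exact Gamma-function asymptotics rather than a soft tail estimate. A secondary difficulty is that $\|f\|_0$ is not a linear functional and $\ln|f|$ is unbounded; packaging the first inequality as the master inequality at $p=1/d$ plus a Carbery--Wright-type bound avoids applying the localization lemma directly to the $L^0$-constraint, which would otherwise require extra care with integrability of $\ln|f|$ and a perturbation argument.
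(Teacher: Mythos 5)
First, note that the paper does not prove Theorem \ref{t1.2} at all: it is quoted as a known result of Bobkov \cite{BobkPoly, Bobk} and used as a black box, so there is no internal proof to compare with; what can be assessed is whether your sketch would constitute a proof. Your overall strategy (localization to log-affine ``needles'' on a segment, then one-dimensional polynomial estimates) is indeed the strategy behind the known proofs, and you correctly identify where the difficulty sits: getting the exponent $d$ with an \emph{absolute} base constant, uniformly in $q$.

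However, as written the argument has genuine gaps precisely at that point. (i) The localization lemma produces \emph{all} measures $e^{\ell}\,dt$ on a segment, i.e.\ truncated exponentials of arbitrary aspect ratio; the uniform distribution and the full one-sided exponential are only the two extreme cases, and no mechanism is offered for passing from these extremes to a general needle, so the one-dimensional reduction is not actually covered. (ii) In the exponential model case the reduction of $\|P\|_{L^q}/\|P\|_{L^p}$ to $\Gamma(dq+1)^{1/q}/\Gamma(dp+1)^{1/p}$ is valid only for the single monomial $(t-a)^d$; for general $P$ you appeal to ``a compactness argument on the finite-dimensional sphere of normalized polynomials,'' but compactness can only yield some finite constant for each fixed $(d,p,q)$ with no control of its form, whereas the statement requires $(cq/p)^d$ with $c$ absolute and $q\ge1$ unbounded --- so the key quantitative step is asserted, not proved. (iii) The auxiliary bound $\|f\|_{1/d}\le C^{d}\|f\|_0$ is claimed to follow ``by the same method''; the soft route through small-ball/tail estimates (Theorem \ref{t1.1}, Carbery--Wright) loses an extra factor $d^{d}$, so it too needs the sharp one-dimensional input. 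The standard way to close all three gaps at once is to work on an arbitrary needle with the factorization $P(t)=a\prod_i(t-z_i)$: use multiplicativity of $\|\cdot\|_0$, the generalized H\"older inequality, and one-factor comparisons $\|t-z\|_{L^{s}(\nu)}\le c\,s\,\|t-z\|_{L^1(\nu)}$, $\|t-z\|_{L^1(\nu)}\le C\|t-z\|_{L^0(\nu)}$ with absolute constants valid for every one-dimensional log-concave $\nu$ (this yields $\|P\|_q\le(cqd)^d\|P\|_0$ uniformly over needles, hence the first inequality); the second inequality, $\|f\|_q\le(cq)^d\|f\|_1$ without the $d^{d}$ loss, requires Bobkov's finer one-dimensional argument and does not follow from the per-factor bookkeeping or from tail estimates alone. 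Without these ingredients the proposal is a correct roadmap but not a proof.
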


Some infinite-dimensional analogues of these two theorems are presented in \cite{ArutKos}.

\begin{theorem}[see \cite{ArutKos}]\label{t1.3}
Let $\mu$ be a log-concave measure on $\mathbb{R}^n$ and let $U$ be a set of positive $\mu$-measure.
Then there is an absolute constant $C$ such that, for every polynomial $f$ of degree $d$,
the following estimate holds:
$$
    \mu(U)^{d+1}\int|f|d\mu\le (Cd)^{2d}\int_U|f|d\mu.
$$
\end{theorem}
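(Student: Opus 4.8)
The plan is to exploit anti\-concentration of polynomials: a polynomial of degree $d$ cannot be uniformly small on a set of positive measure, and this is exactly what forces $\int_U|f|\,d\mu$ to be bounded below by a power of $\mu(U)$ times $\|f\|_1$. The only genuinely non\-elementary input is the Carbery--Wright inequality recalled in the introduction (whose constant $C(d)$ is known to be at most linear in $d$); everything else is bookkeeping. First I would normalize. Both sides of the claimed inequality are homogeneous of degree one in $f$, so (discarding the trivial case $f\equiv 0$) we may assume $\|f\|_1=\int|f|\,d\mu=1$, and set $\beta=\mu(U)\in(0,1]$; the goal becomes $\beta^{d+1}\le (Cd)^{2d}\int_U|f|\,d\mu$.

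Next I would pick the threshold $t=\bigl(\beta/(2C(d))\bigr)^{d}$, where $C(d)$ is the Carbery--Wright constant for degree $d$. Since $\|f\|_1=1$, the Carbery--Wright inequality gives $\mu(|f|\le t)\le C(d)\,t^{1/d}=\beta/2$. Hence
$$
\mu\bigl(U\cap\{|f|>t\}\bigr)\ \ge\ \mu(U)-\mu(|f|\le t)\ \ge\ \beta-\beta/2\ =\ \beta/2,
$$
and therefore
$$
\int_U|f|\,d\mu\ \ge\ \int_{U\cap\{|f|>t\}}|f|\,d\mu\ \ge\ t\,\mu\bigl(U\cap\{|f|>t\}\bigr)\ \ge\ \frac{\beta t}{2}\ =\ \frac{\beta^{d+1}}{2^{d+1}C(d)^{d}}.
$$
Undoing the normalization, for a general polynomial this reads $\mu(U)^{d+1}\|f\|_1\le 2^{d+1}C(d)^{d}\int_U|f|\,d\mu$. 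Finally I would absorb the constant: using $C(d)\le C_0 d$ for an absolute $C_0$, one has $2^{d+1}C(d)^{d}\le 2\,(2C_0)^{d}d^{d}\le (Cd)^{2d}$ for any absolute $C$ with $C^2\ge 2C_0$, which gives the stated form.

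The main obstacle is precisely the appeal to Carbery--Wright, i.e. to a \emph{power\-type} small\-ball bound $\mu(|f|\le t\|f\|_1)\lesssim (t/\|f\|_1)^{1/d}$ with a $d$\-dependence that is polynomial in $d$. One can extract a small\-ball estimate from the tools already in the excerpt — from Theorem \ref{t1.2} with $q=1$ one gets $\int\ln|f|\,d\mu\ge \ln\|f\|_1-d\ln(cd)$, and splitting this integral over $\{|f|\le t\|f\|_1\}$ and using $\ln x\le x$ yields $\mu(|f|\le t\|f\|_1)\lesssim (d\ln(cd)+1)/\ln(1/t)$ — but this is only a \emph{logarithmic} bound, and feeding it into the argument above produces a prefactor growing like $\beta^{d}\exp\!\bigl(c\,d\ln d/\beta\bigr)$, which blows up as $\beta\to0$ and is not of the form $(Cd)^{2d}$. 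So the genuine anti\-concentration of Carbery--Wright (not merely reverse Hölder and tail bounds) is what makes the argument work; the rest — normalization, the intersection estimate, and the constant chase — is routine.
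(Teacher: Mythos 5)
Your proposal is correct, but note that the paper itself contains no proof of Theorem \ref{t1.3}: it is imported from \cite{ArutKos}, so the only ``paper proof'' is an external citation, and your argument is a genuinely independent, self-contained route. The steps are sound: after normalizing $\|f\|_{L^1(\mu)}=1$ and setting $\beta=\mu(U)$, the choice $t=\bigl(\beta/(2C(d))\bigr)^d$ together with the Carbery--Wright bound gives $\mu(|f|\le t)\le\beta/2$, hence $\mu\bigl(U\cap\{|f|>t\}\bigr)\ge\beta/2$ and $\int_U|f|\,d\mu\ge\beta^{d+1}2^{-(d+1)}C(d)^{-d}$, i.e. $\mu(U)^{d+1}\|f\|_{L^1(\mu)}\le 2^{d+1}C(d)^{d}\int_U|f|\,d\mu$. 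With the known linear growth $C(d)\le C_0d$ (which you must cite from \cite{CarWr}, since the present paper's introduction states the inequality only with an unspecified $C(d)$), this is in fact \emph{stronger} than the stated estimate, giving a constant of order $(Cd)^{d}$ rather than $(Cd)^{2d}$; even the cruder bound $C(d)\lesssim d^2$ would suffice for the stated form, so the dependence on the sharpness of Carbery--Wright is mild. The cited source instead works with the tools the paper actually reproduces (the tail estimate of Theorem \ref{t1.1} and the moment comparisons of Theorem \ref{t1.2}), which is consistent with the $(Cd)^{2d}$ constant in the statement; your small-ball argument buys brevity and a better constant at the price of invoking Carbery--Wright, and your closing remark correctly diagnoses why the logarithmic small-ball bound extractable from Theorem \ref{t1.2} alone would not do. One triviality to fix: in the final constant chase you need $C^2\ge 4C_0$ (at $d=1$ the extra factor $2$ in $2(2C_0)^d d^d$ is not absorbed by $C^2\ge 2C_0$), which of course does not affect the existence of an absolute constant.
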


Let us recall two localization lemmas.
The first one is concerned with log-concave measures.

\begin{theorem}[see \cite{FrGue, KLS, LS}]\label{t1.4}
Let $f_1, f_2$ be a pair of two upper semi-continuous nonnegative functions on $\mathbb{R}^n$ and let
$f_3, f_4$ be a pair of two lower semi-continuous nonnegative functions on $\mathbb{R}^n$.
Suppose that
for every compact interval $\Delta=[a,b]\subset\mathbb{R}^n$ and
every measure $\nu$ with a density of the form $e^\ell$
with respect to Lebesgue measure on $\Delta$,
where $\ell$ is an affine function on $\Delta$, one has
$$
    \biggl(\int_\Delta f_1d\nu\biggr)^\alpha\biggl(\int_\Delta f_2d\nu\biggr)^\beta\le
    \biggl(\int_\Delta f_3d\nu\biggr)^\alpha\biggl(\int_\Delta f_4d\nu\biggr)^\beta.
$$
Then the following inequality holds for every log-concave measure $\mu$ on $\mathbb{R}^n$:
$$
    \biggl(\int f_1d\mu\biggr)^\alpha\biggl(\int f_2d\mu\biggr)^\beta\le
    \biggl(\int f_3d\mu\biggr)^\alpha\biggl(\int f_4d\mu\biggr)^\beta.
$$
\end{theorem}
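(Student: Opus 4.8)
The plan is to deduce the inequality for an arbitrary log-concave $\mu$ from the one-dimensional hypothesis by means of the Fradelizi--Gu\'edon extreme-point description of classes of log-concave measures subject to finitely many linear constraints. Since both sides of the desired inequality are positively homogeneous of the same degree in the measure, it suffices to treat probability measures, and the cases $\alpha=0$ or $\beta=0$ are the single-function statement, so assume $\alpha,\beta>0$; the situations in which one of the integrals is $0$ or $+\infty$ (e.g. $f_2=0$ $\mu$-a.e., or $\int f_3\,d\mu=\infty$) are immediate, so I may assume all four integrals finite and positive. By truncating $\mu$ to a closed ball $B$, renormalizing (the result is again log-concave), and letting $B$ exhaust $\mathbb{R}^n$ --- using $f_i\ge0$ and monotone convergence --- it is enough to prove the inequality when $\operatorname{supp}\mu\subset B$; then the relevant class of competitors, namely log-concave probability measures supported in $B$, is weak-$*$ compact. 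Here the semicontinuity hypotheses are precisely what is needed: $\nu\mapsto\int f_1\,d\nu$ and $\nu\mapsto\int f_2\,d\nu$ are upper semicontinuous while $\nu\mapsto\int f_3\,d\nu$ and $\nu\mapsto\int f_4\,d\nu$ are lower semicontinuous for the weak-$*$ topology.

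Fix such a $\mu$, put $c_i=\int f_i\,d\mu$, and let $\mathcal{K}$ be the class of log-concave probability measures $\nu$ supported in $B$ with
$$
\int f_2\,d\nu\ge c_2,\qquad\int f_3\,d\nu\le c_3,\qquad\int f_4\,d\nu\le c_4 .
$$
By the semicontinuity above, $\mathcal{K}$ is weak-$*$ closed, hence compact, and it contains $\mu$; and the upper semicontinuous functional $\nu\mapsto\int f_1\,d\nu$ attains its supremum over $\mathcal{K}$. By the localization theorem of Lov\'asz--Simonovits and Kannan--Lov\'asz--Simonovits, in the extreme-point form of \cite{FrGue} (see also \cite{KLS, LS}), this supremum is attained at a measure $\widetilde\nu$ which is either a Dirac measure or a measure whose density along a segment $\{(1-t)a+tb\colon t\in[0,1]\}$ equals $c\,e^{\ell}$ with $\ell$ affine; these are exactly the measures appearing in the hypothesis (the Dirac case being a one-point interval, the positive constant $c$ being absorbed into the affine exponent). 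Applying the hypothesis to $\widetilde\nu$,
$$
\Bigl(\int f_1\,d\widetilde\nu\Bigr)^{\alpha}\Bigl(\int f_2\,d\widetilde\nu\Bigr)^{\beta}\le\Bigl(\int f_3\,d\widetilde\nu\Bigr)^{\alpha}\Bigl(\int f_4\,d\widetilde\nu\Bigr)^{\beta}.
$$
Since $\widetilde\nu\in\mathcal{K}$ and $\widetilde\nu$ maximizes $\int f_1\,d\cdot$ over $\mathcal{K}\ni\mu$, we have $\int f_1\,d\widetilde\nu\ge c_1$, $\int f_2\,d\widetilde\nu\ge c_2$, $\int f_3\,d\widetilde\nu\le c_3$, $\int f_4\,d\widetilde\nu\le c_4$; as $\alpha,\beta\ge0$, these combine with the last display to give
$$
c_1^{\alpha}c_2^{\beta}\le\Bigl(\int f_1\,d\widetilde\nu\Bigr)^{\alpha}\Bigl(\int f_2\,d\widetilde\nu\Bigr)^{\beta}\le\Bigl(\int f_3\,d\widetilde\nu\Bigr)^{\alpha}\Bigl(\int f_4\,d\widetilde\nu\Bigr)^{\beta}\le c_3^{\alpha}c_4^{\beta},
$$
which is the asserted inequality for $\mu$.

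The decisive and most delicate step is the extreme-point reduction: over the \emph{non-convex} family of log-concave measures cut out by finitely many linear functionals, a linear objective is nevertheless optimized at a ``simple'' measure --- a point mass, or a log-affine density on a segment. This is the content of the cited localization theorem, whose proof runs through a variational argument: an extremal measure whose support is more than one-dimensional, or whose log-density is not affine on the support, admits a mass-moving perturbation preserving all the linear constraints, contradicting extremality. I would invoke this as a black box from \cite{FrGue, KLS, LS}; what genuinely has to be checked on top of it is only the bookkeeping with semicontinuity and weak-$*$ compactness and the reduction from a general log-concave $\mu$ to a compactly supported one, both sketched above.
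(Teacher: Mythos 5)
The paper itself gives no proof of Theorem \ref{t1.4} (it is quoted from \cite{FrGue, KLS, LS}), so your argument has to be measured against the localization results it invokes, and there it has a genuine gap at precisely the step you call decisive. The extreme-point form of localization in \cite{FrGue} (and the needle lemmas of \cite{KLS, LS}) concerns the family of log-concave probability measures subject to a \emph{single} linear constraint $\int g\,d\nu\ge0$ with $g$ upper semi-continuous: for that family the extreme points are Dirac masses or measures with log-affine density on a segment. Your class $\mathcal{K}$ is carved out by \emph{three} such constraints. For multi-constraint families the extreme points are not needles in general: each constraint consumes one degree of freedom in the bisection argument that produces needles, and the correct statement (Fradelizi and Gu\'edon's later generalized localization theorem) allows extreme measures whose supports have dimension up to the number of constraints. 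So the assertion that the maximizer of $\int f_1\,d\nu$ over $\mathcal{K}$ is a Dirac or a log-affine needle is not what the cited theorems give and is false in general; your Bauer--Milman maximization only returns an extreme point of $\mathcal{K}$, about which nothing useful is known.

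The standard way to make your scheme work (this is the KLS four-function reduction) is to first pass from four functions to two: argue by contradiction, assume $\bigl(\int f_1d\mu\bigr)^\alpha\bigl(\int f_2d\mu\bigr)^\beta>\bigl(\int f_3d\mu\bigr)^\alpha\bigl(\int f_4d\mu\bigr)^\beta$, choose constants $C_1>\int f_3d\mu\bigl(\int f_1d\mu\bigr)^{-1}$ and $C_2>\int f_4d\mu\bigl(\int f_2d\mu\bigr)^{-1}$ with $C_1^\alpha C_2^\beta\le1$, and set $g_1=C_1f_1-f_3$, $g_2=C_2f_2-f_4$, which are upper semi-continuous and have strictly positive $\mu$-integrals. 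Now only one constraint ($\int g_1d\nu\ge0$) and one objective ($\int g_2d\nu$) remain; your compactness and semicontinuity bookkeeping applies verbatim, the one-constraint extreme-point theorem of \cite{FrGue} yields a Dirac or an exponential needle $\nu$ with $\int g_1d\nu\ge0$ and $\int g_2d\nu>0$, whence $\bigl(\int f_3d\nu\bigr)^\alpha\bigl(\int f_4d\nu\bigr)^\beta<\bigl(\int f_1d\nu\bigr)^\alpha\bigl(\int f_2d\nu\bigr)^\beta$, contradicting the needle hypothesis. One smaller point you gloss over: the hypothesis of Theorem \ref{t1.4} as stated concerns nondegenerate segments only (a one-point ``interval'' carries the zero measure), so the Dirac alternative must either be built into the hypothesis or eliminated by a separate limiting argument; absorbing Diracs into ``one-point intervals'' parenthetically does not accomplish this.
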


The second localization lemma is applicable in the case of uniform distributions on convex bodies.

\begin{theorem}[see \cite{FrGue, KLS, LS}]\label{t1.5}
Let $f_1, f_2$ be a pair of two upper semi-continuous nonnegative functions on $\mathbb{R}^n$ and let
$f_3, f_4$ be a pair of two lower semi-continuous nonnegative functions on $\mathbb{R}^n$.
Suppose that
for every compact interval $\Delta=[a,b]\subset\mathbb{R}^n$ and every measure $\nu$
with a density of the form $(\alpha t + \beta)^{n-1}$
with respect to Lebesgue measure on $\Delta$ one has
$$
    \biggl(\int_\Delta f_1d\nu\biggr)^\alpha\biggl(\int_\Delta f_2d\nu\biggr)^\beta\le
    \biggl(\int_\Delta f_3d\nu\biggr)^\alpha\biggl(\int_\Delta f_4d\nu\biggr)^\beta.
$$
Then the following inequality holds for every convex body $K$ in $\mathbb{R}^n$:
$$
    \biggl(\int_K f_1d\lambda\biggr)^\alpha\biggl(\int_K f_2d\lambda\biggr)^\beta\le
    \biggl(\int_K f_3d\lambda\biggr)^\alpha\biggl(\int_K f_4d\lambda\biggr)^\beta,
$$
where $\lambda$ is Lebesgue measure.
\end{theorem}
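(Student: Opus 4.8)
The plan is to argue by contradiction along the localization scheme of \cite{KLS, LS}; at the end I indicate the cleaner route through extreme points from \cite{FrGue}. First I would clear away the trivial and degenerate cases: if $n=1$ a convex body is already an interval and the density $(\alpha t+\beta)^{n-1}$ is constant, so the hypothesis gives the conclusion verbatim; if $\alpha$ or $\beta$ vanishes the four-functional inequality is linear in the measure and localizes by additivity; and the cases where one of the four integrals over the body vanishes or is infinite follow from the semicontinuity of the $f_i$ (being upper semicontinuous, $f_1$ and $f_2$ are bounded on compact sets, so their integrals are automatically finite). So I may assume $n\ge 2$ and $\alpha,\beta>0$. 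Writing $\Phi(\mu)=(\int f_1\,d\mu)^{\alpha}(\int f_2\,d\mu)^{\beta}-(\int f_3\,d\mu)^{\alpha}(\int f_4\,d\mu)^{\beta}$, a functional homogeneous of degree $\alpha+\beta$ in $\mu$, I would suppose $\Phi(\lambda|_{K_0})>0$ for some convex body $K_0$ and aim to produce a measure $\nu$ of the form in the statement with $\Phi(\nu)>0$, contradicting the hypothesis.

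The engine of the proof is a cutting lemma: if $K$ is a convex body with $\Phi(\lambda|_K)>0$, then some hyperplane splits $K$ into convex bodies $K_1,K_2$ with $\Phi(\lambda|_{K_i})>0$ for at least one $i$. To prove it I would apply the ham--sandwich theorem (this is where $n\ge 2$ enters) to the finite measures $f_1\,\lambda|_K$ and $f_3\,\lambda|_K$, obtaining a hyperplane bisecting both, so that $\int_{K_i}f_1\,d\lambda=\tfrac12\int_K f_1\,d\lambda$ and $\int_{K_i}f_3\,d\lambda=\tfrac12\int_K f_3\,d\lambda$ for $i=1,2$. If $\Phi(\lambda|_{K_1})\le 0$ and $\Phi(\lambda|_{K_2})\le 0$, then cancelling the common factor $2^{-\alpha}$ gives $(\int_K f_1\,d\lambda)^{\alpha}(\int_{K_i}f_2\,d\lambda)^{\beta}\le(\int_K f_3\,d\lambda)^{\alpha}(\int_{K_i}f_4\,d\lambda)^{\beta}$ for $i=1,2$; raising to the power $1/\beta$ and summing over $i$ yields $(\int_K f_1\,d\lambda)^{\alpha/\beta}\int_K f_2\,d\lambda\le(\int_K f_3\,d\lambda)^{\alpha/\beta}\int_K f_4\,d\lambda$, i.e.\ $\Phi(\lambda|_K)\le 0$, a contradiction. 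Iterating this from $K_0$, and interleaving the measure-bisecting cuts with the shape-improving cuts of the localization procedure, I would obtain a nested sequence of convex bodies $K_0\supset K^{(1)}\supset K^{(2)}\supset\cdots$ with $\Phi(\lambda|_{K^{(m)}})>0$ whose diameters shrink to $0$ in $n-1$ independent directions; then the normalized restrictions $\mu^{(m)}=\lambda|_{K^{(m)}}/\lambda(K^{(m)})$, for which $\Phi(\mu^{(m)})>0$ by homogeneity, converge weakly to a probability measure $\nu$ carried by a segment. By the Brunn--Minkowski inequality the one-dimensional densities of the $\mu^{(m)}$ along the surviving direction are $\tfrac1{n-1}$-concave, and analysing the limiting cross-sectional volumes identifies the density of $\nu$ as one of the form $(\alpha t+\beta)^{n-1}$. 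Finally, since $f_1,f_2$ are upper and $f_3,f_4$ lower semicontinuous, the maps $\mu\mapsto\int f_i\,d\mu$ have the appropriate weak semicontinuity, so passing to the limit gives $\Phi(\nu)\ge 0$, contradicting the hypothesis for $\nu$ once one checks that the strict defect persists in the limit.

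That last point is handled most cleanly by the extreme-point reformulation of \cite{FrGue}. The conclusion is equivalent to: for every $a,b,c,d>0$ with $a^{\alpha}b^{\beta}>c^{\alpha}d^{\beta}$, no uniform measure on a convex body satisfies the four \emph{linear} constraints $\int f_1\,d\mu\ge a$, $\int f_2\,d\mu\ge b$, $\int f_3\,d\mu\le c$, $\int f_4\,d\mu\le d$; and the hypothesis says the same with ``uniform measure on a convex body'' replaced by ``measure of the prescribed one-dimensional form.'' Let $\mathcal M$ be the weakly compact convex set of probability measures supported in a fixed ball lying in the weak closure of the convex hull of the measures of the prescribed form; via polar coordinates about an interior point, every uniform measure on a convex body $K\subset\mathbb{R}^n$ is a barycenter of measures with density $r^{n-1}$ along rays, hence lies in $\mathcal M$. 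If the conclusion fails for $K_0$ then the corresponding quadruple $a,b,c,d$ satisfies $a^{\alpha}b^{\beta}>c^{\alpha}d^{\beta}$ and the subset of $\mathcal M$ cut out by the four constraints is nonempty; being convex and (by semicontinuity of the $f_i$) weakly closed, hence weakly compact, it has an extreme point, which by the Fradelizi--Guédon characterization of extreme points of measure sets defined by finitely many linear functionals is a measure $\nu$ of the prescribed form (degenerate extreme points being handled by an approximation using the semicontinuity). By monotonicity, $\Phi(\nu)\ge a^{\alpha}b^{\beta}-c^{\alpha}d^{\beta}>0$, which is the desired contradiction --- note that here the strictness comes for free.

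The step I expect to be the main obstacle is, in the first route, the shape control in the iteration (ensuring the process collapses onto a segment carrying a density of exactly the form $(\alpha t+\beta)^{n-1}$ rather than onto a thicker set or a more general $\tfrac1{n-1}$-concave weight, and that the defect survives the limit), and, in the second route, the Fradelizi--Guédon classification of the extreme points together with the treatment of the degenerate ones. Both are precisely the technical cores of \cite{KLS, LS, FrGue}, which I would invoke rather than reprove.
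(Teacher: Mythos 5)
This theorem is not proved in the paper at all: it is imported verbatim from \cite{KLS, LS, FrGue}, so your sketch can only be judged as a standalone argument, and as such it has genuine gaps in both of its routes. In the bisection route, the elementary algebra is fine (halving $\int f_1$ and $\int f_3$ and checking that one half keeps $\Phi>0$), but the two steps you postpone are exactly the content of the theorem: ham--sandwich cuts give no control on the shape of the pieces (Lov\'asz--Simonovits must cut along hyperplanes through a carefully chosen dense family of $(n-2)$-dimensional flats to force collapse onto a segment and to identify the limiting density as an affine function to the power $n-1$), and the semicontinuity of the $f_i$ only yields $\Phi(\nu)\ge 0$ in the limit, which does not contradict the hypothesis $\Phi(\nu)\le 0$; so route 1 is an outline of \cite{KLS, LS}, not a proof. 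The more serious problem is that your ``cleaner'' route misapplies \cite{FrGue}: their theorem characterizes the extreme points of the set of $s$-concave probability measures subject to a \emph{single} linear constraint. For the subset of $\mathcal M$ cut out by your four linear constraints, an extreme point is in general only a convex combination of several (up to five, by a Dubins-type argument) extreme points of $\mathcal M$, hence a mixture of needle measures rather than a single needle; and the four-function inequality is not stable under mixtures. Arithmetically: with $\alpha=\beta=1$, the quadruples of integrals $(1,1,1,1)$ and $(0,100,1,0)$ both satisfy $A_1A_2\le A_3A_4$, while their average $(\tfrac12,\tfrac{101}{2},1,\tfrac12)$ does not. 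Your own polar-coordinate remark shows why this matters: $\lambda_K$ is itself a mixture of measures with density $r^{n-1}$ on segments, so if the inequality passed through mixtures the theorem would be trivial, which it is not. Consequently the claim that ``strictness comes for free'' in route 2 is unjustified.

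The standard derivation repairs both defects at once and is worth knowing: assuming $\bigl(\int_K f_1\bigr)^\alpha\bigl(\int_K f_2\bigr)^\beta>\bigl(\int_K f_3\bigr)^\alpha\bigl(\int_K f_4\bigr)^\beta$, choose $\lambda_1<\int_K f_1/\int_K f_3$ and $\lambda_2<\int_K f_2/\int_K f_4$ with $\lambda_1^\alpha\lambda_2^\beta\ge 1$, and apply the two-function localization lemma of \cite{LS, KLS} (or the one-constraint extreme-point theorem of \cite{FrGue} combined with maximizing the linear functional $\mu\mapsto\int(f_2-\lambda_2 f_4)\,d\mu$ via the Bauer maximum principle) to $g=f_1-\lambda_1 f_3$ and $h=f_2-\lambda_2 f_4$, both of which have positive integrals over $K$. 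The resulting needle measure $\nu$ satisfies $\int f_1\,d\nu>\lambda_1\int f_3\,d\nu$ and $\int f_2\,d\nu>\lambda_2\int f_4\,d\nu$, hence violates the needle hypothesis strictly; this is where the strict inequality genuinely comes from, with only one linear constraint used in the extreme-point step.
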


\section{Behavior of the  distribution
of a polynomial in a neighbourhood of its expectation}

In this section we estimate from below the measure of small deviations of a polynomial from its mean.
Firstly, we need to establish the following important property of the expectation of a polynomial.

\begin{lemma}\label{lem2.1}
Let $\mu$ be a log-concave measure on $\mathbb{R}^n$.
Then there are positive constants $c(d)$ and $s(d)$ depending only on the degree $d$ such that
for every polynomial $f$
of degree $d$ with $\alpha_f>0$ and $m_f=0$ the following estimate holds true:
$$
\mu(f\ge\varepsilon\alpha_f)\ge c(d) \quad \text{for every number}\ \varepsilon\in [0, s(d)].
$$
\end{lemma}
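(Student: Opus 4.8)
The plan is to show that a polynomial with zero mean cannot be "too negative too often," so that it must be bounded below by a small positive multiple of $\alpha_f$ on a set of definite measure. The starting point is the elementary identity $\int f\, d\mu = 0$, which splits as $\int f^+ d\mu = \int f^- d\mu = \tfrac12\alpha_f$ (here $f^\pm$ denote positive and negative parts). Thus $\int f^+ d\mu = \tfrac12\alpha_f > 0$, so in particular $\mu(f > 0) > 0$. The real content is to get a \emph{quantitative} lower bound on $\mu(f \ge \varepsilon\alpha_f)$ that depends only on $d$.

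The key tool is Theorem~\ref{t1.3} applied to a suitable set. Consider the set $U = \{f \ge \varepsilon\alpha_f\}$ and its complement. On $\{f < \varepsilon\alpha_f\}$ we have $f^+ < \varepsilon\alpha_f$ pointwise, hence
$$
\int_{\{f<\varepsilon\alpha_f\}} f^+ \, d\mu \le \varepsilon\alpha_f,
$$
and therefore $\int_U f^+ d\mu \ge \tfrac12\alpha_f - \varepsilon\alpha_f = (\tfrac12 - \varepsilon)\alpha_f$, which is at least $\tfrac14\alpha_f$ once $\varepsilon \le 1/4$. On the other hand, $f^+ \le |f|$, so $\int_U |f|\, d\mu \ge \tfrac14\alpha_f$. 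Now I would like to bound $\int_U |f|\, d\mu$ from above in terms of $\mu(U)$ and $\int |f|\, d\mu = \alpha_f$; for this one can use Hölder's inequality together with the reverse Hölder inequality for polynomials (Theorem~\ref{t1.2}): for any $q > 1$,
$$
\int_U |f|\, d\mu \le \mu(U)^{1-1/q} \|f\|_q \le \mu(U)^{1-1/q} (cq)^d \|f\|_1 = \mu(U)^{1-1/q}(cq)^d \alpha_f.
$$
Combining the two estimates gives $\tfrac14 \le \mu(U)^{1-1/q}(cq)^d$, i.e.
$$
\mu(f\ge\varepsilon\alpha_f) = \mu(U) \ge \bigl(4(cq)^d\bigr)^{-q/(q-1)}.
$$
Choosing $q = 2$ (say) yields a constant $c(d) = (4(2c)^d)^{-2}$ depending only on $d$, valid for every $\varepsilon \in [0, 1/4]$, so we may take $s(d) = 1/4$. (Alternatively, Theorem~\ref{t1.3} applied with this $U$ gives $\mu(U)^{d+1}\alpha_f \le (Cd)^{2d}\int_U|f|\,d\mu$, but here it is used in the wrong direction; the Hölder route above is the clean one, and Theorem~\ref{t1.3} would instead be the tool if one needed to go the other way.)

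The main subtlety, and the step I expect to require the most care, is making sure the split of the mean into positive and negative parts interacts correctly with the set $U$: one must verify that the mass of $f^+$ concentrated on $U$ is genuinely bounded below, which forces $\varepsilon$ to stay below $1/2$ and is why a threshold $s(d)$ (which here can actually be taken independent of $d$) appears. A secondary point is checking that $\alpha_f > 0$ is exactly the hypothesis needed to avoid the degenerate case $f \equiv 0$, and that all the polynomial inequalities invoked (Theorem~\ref{t1.2}) are uniform in the dimension $n$, which is the whole point of the statement. Everything else is a routine combination of Hölder's inequality and the reverse Hölder estimate for log-concave measures.
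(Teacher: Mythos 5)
Your proof is correct, and it takes a genuinely different route from the paper. The paper's argument applies Theorem~\ref{t1.3} to the complementary set $U=\{f<\varepsilon\alpha_f\}$, then controls the resulting term $\int_{\{f\ge\varepsilon\alpha_f\}}|f|\,d\mu$ with the tail bound of Theorem~\ref{t1.1} (splitting at level $4^dk(f)\tau$ and choosing $\tau=\delta^{-1/2}$), and concludes by a soft contradiction: if $\delta=\mu(f\ge\varepsilon\alpha_f)$ were too small, the inequality $(1-\delta)^{d+1}\le (Cd)^{2d}\bigl(2s(d)+\dots\bigr)$ would fail once $s(d)<2^{-1}(Cd)^{-2d}$; this yields $c(d)$ non-explicitly. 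You instead exploit $m_f=0$ to write $\int f^+d\mu=\int f^-d\mu=\tfrac12\alpha_f$, observe that the set $\{f<\varepsilon\alpha_f\}$ can carry at most $\varepsilon\alpha_f$ of the positive mass, so $\int_U|f|\,d\mu\ge\int_U f^+d\mu\ge(\tfrac12-\varepsilon)\alpha_f$, and then bound $\int_U|f|\,d\mu\le\mu(U)^{1/2}\|f\|_2\le\mu(U)^{1/2}(2c)^d\alpha_f$ by H\"older combined with the Khinchin-type inequality $\|f\|_q\le(cq)^d\|f\|_1$ of Theorem~\ref{t1.2}. Dividing by $\alpha_f>0$ gives $\mu(f\ge\varepsilon\alpha_f)\ge\bigl(4(2c)^d\bigr)^{-2}$ for all $\varepsilon\in[0,1/4]$. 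This buys explicit constants, a threshold $s=1/4$ independent of $d$, and avoids Theorems~\ref{t1.1} and~\ref{t1.3} altogether, at the cost of invoking the (nontrivial, but already quoted) reverse H\"older inequality; like the paper's proof, it is uniform in the dimension and in $\mu$ since the constant $c$ in Theorem~\ref{t1.2} is absolute. Your parenthetical remark is also accurate: Theorem~\ref{t1.3} bounds $\int_U|f|\,d\mu$ from below in terms of $\mu(U)$, which is the wrong direction for your argument, though it is exactly the direction the paper needs for its choice of $U$.
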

\begin{proof}
Let $\delta=\mu(f\ge\varepsilon\alpha_f)$.
Applying the inequality from Theorem \ref{t1.3} to the set $U=\{f<\varepsilon \alpha_f\}$ we get
\begin{multline*}
(1-\delta)^{d+1}\int_{\mathbb{R}^n}|f|d\mu=\mu(f<\varepsilon\alpha_f)^{d+1}
\int_{\mathbb{R}^n}|f|d\mu\\
\le(Cd)^{2d}\int_{\{f<\varepsilon\alpha_f\}}|f|d\mu=
(Cd)^{2d}\biggl(\int_{\{0<f<\varepsilon\alpha_f\}}|f|d\mu+\int_{\{f<0\}}|f|d\mu\biggr)\\=
(Cd)^{2d}\biggl(\int_{\{0<f<\varepsilon\alpha_f\}}|f|d\mu+\int_{\{f>0\}}|f|d\mu\biggr)\\=
(Cd)^{2d}\biggl(2\int_{\{0<f<\varepsilon\alpha_f\}}|f|d\mu+\int_{\{f\ge\varepsilon\alpha_f\}}|f|d\mu\biggr).
\end{multline*}
We now estimate $\displaystyle \int_{\{f\ge\varepsilon\alpha_f\}}|f|d\mu$ from above.
For this purpose we use Theorem \ref{t1.1}. Let $\tau>1$
(this number will be chosen later). Then
\begin{multline*}
\int_{\{f\ge\varepsilon\alpha_f\}}|f|d\mu\\
\le\int_{\{4^dk(f)\tau>f\ge\varepsilon\alpha_f\}}|f|d\mu+\int_{\{|f|\ge4^dk(f)\tau\}}|f|d\mu \le
4^dk(f)\tau\delta+\int_{4^dk(f)\tau}^\infty\mu\{|f|>t\}dt\\=
4^dk(f)\tau\delta+d4^dk(f)
\int_{\tau^{1/d}}^\infty\lambda^{d-1}\mu\{|f|>(4\lambda)^dk(f)\}d\lambda\\
\le4^dk(f)\tau\delta+d4^dk(f)\int_{\tau^{1/d}}^\infty\lambda^{d-1}e^{-\lambda}d\lambda.
\end{multline*}
Since $\displaystyle k(f)\le e\int_E|f|d\mu$, the last expression can be estimated from above by
$$
4^de\int_{\mathbb{R}^n}|f|d\mu(\tau\delta+d\int_{\tau^{1/d}}^\infty\lambda^{d-1}e^{-\lambda}d\lambda).
$$
Thus, using the estimate
$$
\int_{\{0<f<\varepsilon\alpha_f\}}|f|d\mu\le s(d)\int_{\mathbb{R}^n}|f|d\mu,
$$
we obtain the inequality
$$
(1-\delta)^{d+1}\le (Cd)^{2d}
\biggl(2s(d)+e4^d\bigl(\tau\delta+d\int_{\tau^{1/d}}\lambda^{d-1}e^{-\lambda}d\lambda\bigr)\biggl).
$$
Now set $\tau=\delta^{-1/2}$. Then, for any $\delta$ close enough to $0$,
 the left-hand side of the inequality
is close to $1$ and the right-hand side is close to $2(Cd)^{2d}s(d)$.
So, if we take $s(d)<2^{-1}(Cd)^{-2d}$, we obtain that
$\delta$ cannot be arbitrary small. Thus, the desired constant
$c(d)$ exists and the lemma is proved.
\end{proof}

\begin{corollary}\label{c2.1} Let $\mu$ be a log-concave measure on $\mathbb{R}^n$.
Then there is a constant $c(d)\in(0, 1)$ depending only on the degree $d$
such that for every polynomial $f$
of degree $d$ with $\sigma_f>0$ one has
$$
1-c(d)\ge\mu(f<m_f)\ge c(d).
$$
\end{corollary}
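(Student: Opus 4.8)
The plan is to deduce the corollary directly from Lemma \ref{lem2.1}, applied once to the centered polynomial $g:=f-m_f$ and once to $-g=m_f-f$. First I would note that $\sigma_f>0$ forces $\alpha_f>0$: if $\alpha_f=\int|f-m_f|\,d\mu=0$, then $f=m_f$ $\mu$-almost everywhere, whence $\sigma_f=0$. Thus both $g$ and $-g$ are polynomials of degree $d$ with zero expectation and with $\alpha_{\pm g}=\alpha_f>0$, so the hypotheses of Lemma \ref{lem2.1} are satisfied, and we may use the constants $c(d)$ and $s(d)$ furnished by that lemma.

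For the upper bound on $\mu(f<m_f)$, apply Lemma \ref{lem2.1} to $g=f-m_f$ with the admissible choice $\varepsilon=s(d)$. This gives $\mu\bigl(f-m_f\ge s(d)\alpha_f\bigr)\ge c(d)$. Since $s(d)\alpha_f>0$, the event $\{f\ge m_f+s(d)\alpha_f\}$ is contained in $\{f\ge m_f\}$, so
$$
\mu(f<m_f)=1-\mu(f\ge m_f)\le 1-\mu\bigl(f\ge m_f+s(d)\alpha_f\bigr)\le 1-c(d).
$$
For the lower bound, apply Lemma \ref{lem2.1} to $-g=m_f-f$, again with $\varepsilon=s(d)$, obtaining $\mu\bigl(m_f-f\ge s(d)\alpha_f\bigr)\ge c(d)$, that is, $\mu\bigl(f\le m_f-s(d)\alpha_f\bigr)\ge c(d)$. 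Because $s(d)\alpha_f>0$, the set on the left is contained in $\{f<m_f\}$, hence $\mu(f<m_f)\ge c(d)$.

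It remains to check that one may take the constant in $(0,1)$. The events $\{f\ge m_f+s(d)\alpha_f\}$ and $\{f\le m_f-s(d)\alpha_f\}$ are disjoint and each has measure at least $c(d)$, so $2c(d)\le 1$ and in particular $c(d)\le 1/2<1$; therefore the $c(d)$ from Lemma \ref{lem2.1} already lies in $(0,1)$ and both displayed estimates hold with it. I do not expect any genuine obstacle here; the only point requiring a little care is the passage between non-strict and strict inequalities, which is exactly why one invokes Lemma \ref{lem2.1} with the strictly positive threshold $\varepsilon=s(d)$ rather than with $\varepsilon=0$.
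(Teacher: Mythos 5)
Your proposal is correct and follows exactly the route the paper intends: the corollary is stated as an immediate consequence of Lemma \ref{lem2.1}, applied to $f-m_f$ and to $m_f-f$, which is precisely what you do. Your use of the strictly positive threshold $\varepsilon=s(d)$ to handle the strict inequality $\{f<m_f\}$, and the disjointness observation showing $c(d)\le 1/2$, are sensible ways of filling in the (omitted) details.
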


\vskip .1in

{\bf A. The case of a Gaussian measure}

\vskip .1in

First we consider the case of the standard Gaussian measure on $\mathbb{R}^n$
and a polynomial of an arbitrary degree.

Below we need the following elementary estimate.
Let $f$ be a smooth function on $\mathbb{R}^n$.
Let $D^mf(x)$ denote the $m$-fold derivative of  $f$, i.e., a multilinear function such that
$$
\frac {d^m}{dt^m}f(x+th)|_{t=0}=D^mf(x)(h,\ldots,h).
$$
Let now $f$ be a polynomial of degree $d$ on $\mathbb{R}^n$. Set
$$
\varphi(t)=f(x+t(y-x)).
$$
Then one has
$$
f(y)=\varphi(1)=\varphi(0)+\sum_{m=1}^d(m!)^{-1}\varphi^{(m)}(0)
=f(x)+\sum_{m=1}^d(m!)^{-1}D^mf(x)(y-x,\ldots,y-x),
$$
thus,
$$
f(y)-f(x)\le\sum_{m=1}^d(m!)^{-1}|D^mf(x)||y-x|^m,
$$
where $|D^mf(x)|^2=\sum_{i_1,\ldots,i_m}|\partial_{x_{i_1},\ldots,x_{i_m}}f(x)|^2$.
Applying this estimate to the polynomial $-f$, we get
\begin{equation}\label{2.1}
|f(y)-f(x)|\le\sum_{m=1}^d(m!)^{-1}|D^mf(x)||y-x|^m.
\end{equation}

We also need the following estimate for the standard Gaussian measure $\gamma$ on $\mathbb{R}^n$:
\begin{equation}\label{2.2}
\|D^mf\|^2_2:=\int|D^mf|^2d\gamma\le a(m,d)\sigma_f^2,
\end{equation}
where the constant $a(m,d)$ depends only on $m$ and $d$ and is independent of the dimension~$n$.
This estimate follows from the equivalence of the Sobolev and
$L^p$-norms on the space of all polynomials of a fixed degree (see \cite{Gaus}).

\vskip .1in

Let us recall the isoperimetric inequality for Gaussian measures
(see \cite{STs, Bor3, Gaus}), which is used in the proof of the next theorem.
Let $\gamma$ be the standard Gaussian measure on $\mathbb{R}^n$,
let $C$ be a measurable set in $\mathbb{R}^n$, and let $B$ be
the closed unit ball centered at the origin. Then
\begin{equation}\label{2.3}
\gamma(C+\varepsilon B)\ge\Phi(a+\varepsilon),
\end{equation}
where $\Phi$ is the distribution function of the standard Gaussian random variable on the real line and
the number $a$ is chosen from the relation $\gamma(C)=\Phi(a)$.

For a Gaussian measure, there is a better estimate
for the tails of a polynomial $f$ of degree $d$ than in Theorem \ref{t1.1}:
\begin{equation} \label {2.4}
\gamma(|f|>\|f\|_2 t^d)\le R \exp\bigl(-rt^2\bigr)
\end{equation}
for every $t>0$ and some positive constants $R$ and $r$ (see \cite[Corollary 5.5.7]{Gaus}).

\begin{theorem}\label{t2.1}
Let $\gamma$ be the standard Gaussian measure on $\mathbb{R}^n$. Then there is a number
$L(d)>0$ depending only on the degree $d$ such that for every polynomial $f$ of
degree $d$ the following estimate holds:
$$
\gamma(|f-m_f|\le\sigma_f s)\ge L(d)s|\ln s|^{-d/2} \quad \text{for}\ 0\le s\le 1/2.
$$
\end{theorem}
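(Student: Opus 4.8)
The plan is to reduce the lower bound to the behavior of $f$ near a point where $|f-m_f|$ is small, and to use the Gaussian isoperimetric inequality to ``inflate'' such a point to a set of controlled measure. Without loss of generality assume $m_f=0$ and $\sigma_f=1$. By Corollary \ref{c2.1} there is a constant $c(d)\in(0,1)$ with $\gamma(f<0)\ge c(d)$ and $\gamma(f\ge 0)\ge c(d)$; since $f$ is continuous this forces (for instance) the existence of many points where $f$ is close to zero, but more usefully it gives $\gamma(|f|\le t_0)$ bounded below for some fixed $t_0=t_0(d)>0$ — indeed if $\gamma(|f|\le t_0)$ were tiny, then one of $\{f>t_0\}$ or $\{f<-t_0\}$ would have measure close to $1-c(d)$ from one side while the other side keeps mass $c(d)$, and by the Carbery--Wright inequality (or directly Theorem \ref{t1.1}) the set $\{|f|\le t_0\}$ cannot be made to have measure below a dimension-free positive constant. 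So fix a set $A=\{|f|\le t_0\}$ with $\gamma(A)\ge c_1(d)>0$.

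Now I would use the gradient-type estimate \eqref{2.1} together with \eqref{2.2}: for $x\in A$ and $y$ with $|y-x|\le\rho$,
$$
|f(y)|\le |f(x)|+\sum_{m=1}^d\frac{1}{m!}|D^mf(x)|\rho^m\le t_0+\sum_{m=1}^d\frac{1}{m!}|D^mf(x)|\rho^m.
$$
The issue is that $|D^mf(x)|$ is only controlled in $L^2(\gamma)$, not pointwise. So I would first shrink $A$ to
$$
A'=\Bigl\{x\in A:\ |D^mf(x)|\le M\ \text{for } m=1,\dots,d\Bigr\}
$$
with $M=M(d)$ chosen via Chebyshev from \eqref{2.2} (and a union bound over $m=1,\dots,d$) large enough that $\gamma(A')\ge c_1(d)/2=:c_2(d)>0$. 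On $A'+\rho B$ we then get $|f(y)|\le t_0+\sum_{m=1}^d\frac{M}{m!}\rho^m\le s$ provided $\rho\le\rho(s)$ for a suitable $\rho(s)$ that, for small $s$, behaves like a constant times $s$ (the linear term dominates), say $\rho(s)=c_3(d)\,s$. Hence $\{|f|\le s\}\supseteq A'+\rho(s)B$.

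Finally apply the Gaussian isoperimetric inequality \eqref{2.3}: writing $\gamma(A')=\Phi(a)$ with $a\ge \Phi^{-1}(c_2(d))=:-a_0(d)$ (so $a\ge -a_0(d)$), we get
$$
\gamma(|f|\le s)\ge\gamma\bigl(A'+\rho(s)B\bigr)\ge\Phi\bigl(a+\rho(s)\bigr)\ge\Phi\bigl(-a_0(d)+c_3(d)s\bigr).
$$
For $s\in[0,1/2]$ this lower bound is already a positive constant depending only on $d$, which in particular is $\ge L(d)\,s|\ln s|^{-d/2}$ for $s$ bounded away from $0$. The only place the factor $s|\ln s|^{-d/2}$ genuinely matters is as $s\to 0$, and there the bound $\Phi(-a_0+c_3 s)-\Phi(-a_0)\sim \phi(-a_0)c_3 s$ is \emph{linear} in $s$, which is even stronger than $s|\ln s|^{-d/2}$; combined with a trivial lower bound near $s=0$ this closes the estimate.

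\medskip

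The main obstacle I anticipate is the passage from the $L^2(\gamma)$-control of the derivatives $|D^mf|$ to a usable pointwise bound on a set of definite measure without losing too much: one must verify that truncating to $A'$ still leaves $\gamma(A')$ bounded below by a constant depending only on $d$ (this is where \eqref{2.2} with its dimension-free constant $a(m,d)$ is essential), and that the resulting $\rho(s)$ really is of order $s$ rather than, say, $s^{1/d}$, near $s=0$ — which it is, because only the $m=1$ term of $\sum M\rho^m/m!$ survives the small-$\rho$ limit. A secondary point to check carefully is the lower bound on $\gamma(|f|\le t_0)$: one should make the Carbery--Wright / Theorem \ref{t1.1} argument explicit, e.g.\ by noting that if $\gamma(|f|\le t_0)$ is small then $\|f\|_1$ cannot be comparable to $\sigma_f=1$, contradicting Theorem \ref{t1.2} which gives $\|f\|_1\ge c\,\|f\|_2=c$ via the reverse estimate $\|f\|_2\le(c')^d\|f\|_1$. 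Everything else is routine: the chain rule expansion \eqref{2.1}, Chebyshev, and plugging into \eqref{2.3}.
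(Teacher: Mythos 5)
There is a genuine gap, and it is at the central step. Your set $A'$ is contained in $\{|f|\le t_0\}$ where $t_0=t_0(d)$ is a \emph{fixed} constant (indeed it must be of order $1$ or larger for $\gamma(|f|\le t_0)$ to have a definite lower bound via the tail estimate; note also that Carbery--Wright and Theorem \ref{t1.1} bound such probabilities from \emph{above}, not below, so your justification there is backwards, though a fixed $t_0(d)$ does follow from \eqref{2.4}). On the enlargement $A'+\rho B$ the expansion \eqref{2.1} only yields $|f(y)|\le t_0+\sum_{m=1}^d\frac{M}{m!}\rho^m$, which is never $\le s$ once $s<t_0$; so the claimed inclusion $A'+\rho(s)B\subseteq\{|f|\le s\}$ is false for small $s$. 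This is visible in your conclusion: $\gamma(|f|\le s)\ge\Phi(-a_0(d)+c_3(d)s)$ would be a lower bound by a positive constant uniformly in $s\in[0,1/2]$, which already fails for $f(x)=x_1$, where $\gamma(|f|\le s)\asymp s$. The idea the paper uses, and which your argument is missing, is to take points that are within $\varepsilon$ of \emph{both} level sets $\{f<0\}$ and $\{f>0\}$, i.e.\ the set $(\{f<0\}+\varepsilon B)\cap(\{f>0\}+\varepsilon B)$, intersected with $\{|D^mf|\le t^{d-m}\|D^mf\|_2\}$; there \eqref{2.1} gives $|f|\le\sum_m(m!)^{-1}|D^mf|\varepsilon^m\le C(d)\sigma_f t^d\varepsilon$ with no additive constant, and the measure of this set is bounded below by Gaussian isoperimetry \eqref{2.3} applied to each of $\{f<0\}$, $\{f>0\}$ (whose measures are bounded away from $0$ and $1$ by Corollary \ref{c2.1}), minus the exceptional derivative set.

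A second, related defect is the fixed-level Chebyshev truncation and the ensuing claim that the bound should be linear in $s$ with no logarithm. In the corrected argument the good set lives near the zero level of $f$ and has measure only of order $\varepsilon$ (the isoperimetric gain is $\Phi(a_f+\varepsilon)-\Phi(a_f)+\Phi(-a_f+\varepsilon)-\Phi(-a_f)\approx 2q(d)\varepsilon$), so the exceptional set where the derivatives are large must have measure $o(\varepsilon)$, not merely a small constant; a Chebyshev cut at a fixed level $M(d)$ removes a set of constant measure and destroys the bound. The paper instead uses the Gaussian tail estimate \eqref{2.4} for the polynomials $D^mf$ at thresholds $t^{d-m}\|D^mf\|_2$ with $t\sim|\ln s|^{1/2}$, making the exceptional mass $O(s^2)\ll\varepsilon$; the price is the factor $t^d$ in the bound $|f|\le C(d)\sigma_ft^d\varepsilon$, which is exactly the origin of the $|\ln s|^{-d/2}$ loss in the statement. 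So the logarithmic factor is not an artifact you can discard by a ``linear in $s$'' asymptotic; within this method it is forced, and your proposal as written does not establish the theorem at any small scale $s$.
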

\begin{proof}
Without loss of generality we can assume that $m_f=0$.
Fix a number $t>1$ (this number will be chosen later).
Let $B$ be the unit ball in $\mathbb{R}^n$ centered at the origin.
For $\varepsilon<1$ consider the set
$$
A=(\{f<0\}+\varepsilon B)\bigcap(\{f>0\}+\varepsilon B)\bigcap\bigcap_{m=1}^d\bigl\{|D^mf(x)|\le t^{d-m} \|D^mf\|_2\bigr\}.
$$
Note that by (\ref{2.1}) we have
$A\subset\bigl\{|f|\le\sum_{m=1}^d(m!)^{-1} t^{d-m}\|D^mf\|_2\varepsilon^m\bigr\}$.
Since $t>1$ and $\varepsilon<1$, we have
$\varepsilon^m\le\varepsilon$, $t^{d-m}\le t^d$, and inequality (\ref{2.2})
yields the estimate
$$
\sum_{m=1}^d(m!)^{-1} t^{d-m}\|D^mf\|_2\varepsilon^m\le
C(d)\sigma_ft^d\varepsilon,
$$
where $C(d)$ is a constant.
Thus, we have the inclusion
$$
A\subset\{|f|\le C(d)\sigma_ft^d\varepsilon\}.
$$
Let $a_f$ be the number such that $\gamma(f<0)=\Phi(a_f)$.
Using the Gaussian isoperimetric inequality (\ref{2.3}), we can estimate the
measure of the set $A$:
\begin{multline*}
\gamma(A)\ge 1-(1-\gamma(\{f<0\}+\varepsilon B))-(1-\gamma(\{f>0\}+\varepsilon B))\\
-\sum_{m=1}^d\gamma\{|D^mf(x)|\ge t^{d-m} \|D^mf\|_2 \}\\
\ge-1+\Phi(a_f+\varepsilon)+\Phi(-a_f+\varepsilon)-R \sum_{m=1}^de^{-r t^2}\\
=\Phi(a_f+\varepsilon)-\Phi(a_f)+\Phi(-a_f+\varepsilon)-\Phi(-a_f)-R de^{-r t^2}.
\end{multline*}
In the last inequality Theorem \ref{t1.1} is used.

By Corollary \ref{c2.1}, there is a number $c(d)$ such that
$$
1-c(d)\ge\gamma(f<m_f)\ge c(d),
$$
so, there is a constant $a(d)$ such that for every polynomial $f$ of degree $d$
with zero expectation one has $|a_f|\le a(d)$.
Let $q(d)$ be the minimal value of the standard Gaussian density on the interval $[-a(d)-1,a(d)+1]$.
Then
$\Phi(a+\varepsilon)-\Phi(a)\ge q(d)\varepsilon$ and similarly
$\Phi(-a+\varepsilon)-\Phi(-a)\ge q(d)\varepsilon$.
Therefore,
$$
\gamma(|f|\le C(d)\sigma_ft^d\varepsilon)\ge\gamma(A)\ge 2q(d)\varepsilon-R de^{-rt^2}.
$$
Let $0<s<1$, $\varepsilon=C(d)^{-1}2^{-d}r^{d/2}s|\ln s|^{-d/2}$, $t=2r^{-1/2}|\ln s|^{1/2}$. Then
\begin{multline*}
\gamma(|f|\le\sigma_fs)\ge 2q(d)C(d)^{-1}2^{-d}r^{d/2}s|\ln s|^{-d/2}-Rde^{-2|\ln s|}\\=
2q(d)C(d)^{-1}2^{-d}r^{d/2}s|\ln s|^{-d/2}-Rds^2.
\end{multline*}
There is a number $s_0(d)\in (0, e^{-1})$ such that for any
$s\le s_0(d)$ one has
$$
\gamma(|f|\le\sigma_fs)\ge q(d)C(d)^{-1}2^{-d}r^{d/2}s|\ln s|^{-d/2}.
$$
Thus, there is a positive constant $L(d)$ such that
$$
\gamma(|f|\le\sigma_fs)\ge L(d)s|\ln s|^{-d/2},
$$
whenever $0\le s\le 1/2$.
\end{proof}

\vskip .2in

{\bf B. The case of a log-concave measure and a polynomial of degree two}
\vskip .1in

Here we obtain an estimate that is sharper than the previous one
in the case of an arbitrary log-concave measure,
but applies only to polynomials of degree $2$.
The proof in this case relies on the following lemma.

\begin{lemma}\label{lem2.2}
Let $s\ge0$.
Then there is a constant $c$ such that the inequality
$$
\varepsilon\int_0^se^{-t}I_{\{f\le-\varepsilon\}}dt\int_0^se^{-t}I_{\{f\ge\varepsilon\}}dt
\le c\int_0^se^{-t}I_{\{|f|<\varepsilon\}}dt\int_0^se^{-t}|f(t)|dt
$$
holds for every polynomial $f$ of degree two on the real line.
\end{lemma}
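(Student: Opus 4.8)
First I would normalise the inequality. Every factor is positively homogeneous: replacing $f$ by $f/\varepsilon$ turns the indicators $I_{\{f\le-\varepsilon\}}$, $I_{\{f\ge\varepsilon\}}$, $I_{\{|f|<\varepsilon\}}$ into the same indicators with threshold $1$, while $\int_0^s e^{-t}|f|\,dt$ becomes $\varepsilon\int_0^s e^{-t}|f/\varepsilon|\,dt$; dividing by $\varepsilon$ we may assume $\varepsilon=1$. Let $\nu$ be the measure with density $e^{-t}$ on $[0,s]$ and put $A=\{t\in[0,s]\colon f(t)\le-1\}$, $B=\{t\colon f(t)\ge1\}$, $C=\{t\colon|f(t)|<1\}$, so the claim reads
$$
\nu(A)\,\nu(B)\le c\,\nu(C)\int_0^s|f|\,d\nu .
$$
If $\nu(A)=0$ or $\nu(B)=0$ the left side vanishes; otherwise $f$ takes values $\le-1$ and $\ge1$ on $[0,s]$, so by the intermediate value theorem $\nu(C)>0$ too. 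I record at once the crude bound $\int_0^s|f|\,d\nu\ge\nu(A)+\nu(B)$, since $|f|\ge1$ on the disjoint sets $A$ and $B$.

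Next I would exploit that $f$ is quadratic. Replacing $f$ by $-f$ (which swaps $A$ and $B$ and leaves $C$ and $\int|f|\,d\nu$ fixed), I may assume the leading coefficient $a$ is $\ge0$; the affine case $a=0$ is treated at the end. For $a>0$ the set $A=[\rho_1,\rho_2]$ is a single interval containing the vertex, $B$ is a union of at most two intervals $B_1=[0,p]$ and $B_2=[q,s]$ touching the endpoints (either may be empty), and $C$ is the union of at most two "crossing" intervals $C_1=(p,\rho_1)$, $C_2=(\rho_2,q)$, on each of which $f$ is monotone and runs between the levels $-1$ and $1$; moreover $B_j\ne\emptyset$ forces $C_j\ne\emptyset$, and if $C_1=C_2=\emptyset$ then by convexity $f\le-1$ on all of $[0,s]$ and the left side is $0$. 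Writing $\nu(A)\nu(B)=\nu(A)\nu(B_1)+\nu(A)\nu(B_2)$, it is enough to bound each summand by $c\,\nu(C)\int_0^s|f|\,d\nu$, and the two are handled by analogous arguments, so I fix a pair $(B_j,C_j)$ with $B_j\ne\emptyset$.

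The core of the argument consists of two lower bounds for $\int_0^s|f|\,d\nu$, both from convexity of $f$. Since the graph of $f$ lies above the secant line through the endpoints of $C_j$ at points outside $\overline{C_j}$, one gets $f(t)-1\ge 2|C_j|^{-1}\operatorname{dist}(t,C_j)$ for $t\in B_j$; integrating, $\int_{B_j}|f|\,d\nu$ exceeds $\nu(B_j)$ by a term comparable to $|C_j|^{-1}$ times an explicit $\nu$-weighted width of $B_j$ (for $j=1$ this extra term is $2|C_1|^{-1}(p-1+e^{-p})$), which is the useful estimate when $|B_j|\gtrsim|C_j|$. Alternatively, on $A$ one has $|f|=1+a(t-\rho_1)(\rho_2-t)$, so $\int_A|f|\,d\nu=\nu(A)+a\int_A(t-\rho_1)(\rho_2-t)\,d\nu$, and comparing $f-1=a(t-\rho_1)(t-\rho_2)-2$ with its factorisation $a(t-\eta_1)(t-\eta_2)$ — whose roots are symmetric about the vertex with $\rho_1-\eta_1=\eta_2-\rho_2=|C_j|$ — yields $a\,|C_j|\,(|A|+|C_j|)=2$; hence $\int_A|f|\,d\nu$ is large precisely when $|C_j|$ is small. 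Combining these with $\nu(C)\ge\nu(C_j)$, the crude bound $\int|f|\,d\nu\ge\nu(A)+\nu(B)$, and the elementary identities $\nu(A)\le e^{-\rho_1}$, $\nu(B_1)=1-e^{-p}$, $\nu(C_1)=e^{-p}(1-e^{-|C_1|})$ (and their analogues near $s$), one verifies $\nu(A)\nu(B_j)\le c\,\nu(C)\int|f|\,d\nu$ with an absolute $c$ by splitting into the cases $|C_j|\ge|B_j|$ and $|C_j|<|B_j|$.

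The main obstacle I expect is the bookkeeping needed to produce one absolute constant valid in every regime: $B_j$, $C_j$ or $A$ very short, the whole configuration pushed far to the right of $[0,s]$ where $e^{-t}$ is tiny, and the asymmetric configurations where only one of $B_1,B_2$ is present. These difficulties are, however, mutually exclusive in a quantitative way — a short $C_j$ makes $f$ steep and, through either convexity bound, inflates $\int|f|\,d\nu$ by exactly the reciprocal factor; a short $B_j$ makes $\nu(B_j)$ small so its summand is trivial; and a rightward shift multiplies both $\nu(A)\nu(B_j)$ and $\nu(C_j)\int|f|\,d\nu$ by comparable exponential factors — so every case closes. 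Finally, the affine case $a=0$ is simpler: then $f$ coincides with its own secant line on the single crossing interval, the secant estimate is available from both the $B$-side and the $A$-side, and the same case split delivers the conclusion.
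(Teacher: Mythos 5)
Your plan is sound and, as far as I can check, it can be completed into a correct proof, but it follows a genuinely different route from the paper. The paper normalizes the leading coefficient to $1$ (keeping $\varepsilon$ free), treats $s\ge1$ first, bounds the left-hand side by $\varepsilon e^{-\tau}$ where $\tau$ is the smallest root in $(0,s)$, and then gets the two needed lower bounds from general log-concave machinery: the restriction inequality (Theorem \ref{t1.3}) to pass from $[0,s]$ to $(0,\infty)$, and the Khinchin-type comparison of $L^1$, $L^2$, $L^0$ norms (Theorem \ref{t1.2}) applied to the factors $t-\tau$, $t-\sigma$ to show $\int_0^s e^{-t}|f|\,dt\gtrsim(1+(1-\tau)^2)^{1/2}(1+(1-\sigma)^2)^{1/2}$, while the middle set is bounded below on a unit interval around $\tau$ via $|f(t)|\le(|\tau-\sigma|+1)|t-\tau|$; large $\varepsilon$ is finished by Chebyshev's inequality and $s<1$ by flattening the weight. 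You instead normalize $\varepsilon=1$, keep the leading coefficient, and work directly with convexity of the quadratic: the secant bound on $B_j$, the exact formula $|f|=1+a(t-\rho_1)(\rho_2-t)$ on $A$, and the identity $a|C_j|(|A|+|C_j|)=2$ tying the steepness to the width of the crossing interval, with all $s\ge0$ treated uniformly. Your route is elementary and self-contained (it uses none of Theorems \ref{t1.1}--\ref{t1.3}) and gives in principle explicit constants, at the price of heavier case bookkeeping; the paper outsources that work to general inequalities for polynomials under log-concave measures and keeps the one-dimensional verification short.

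Two points in your endgame need more care than you state. First, the dichotomy $|C_j|\ge|B_j|$ versus $|C_j|<|B_j|$ is too coarse on its own: one must also split on whether $B_j$ is long (there the secant term $2|C_j|^{-1}(p-1+e^{-p})$ is exactly what compensates the mismatch between $\nu(B_1)\approx1$ and $\nu(C_1)\approx e^{-p}|C_1|$) and on whether the portion of $A$ inside $[0,s]$ is shorter than $|C_j|$ (there the crude bound $\int|f|\,d\nu\ge\nu(B_j)$ already suffices). Second, $\rho_1,\rho_2$ should be the roots of $f=-1$, so $A=[\rho_1,\rho_2]\cap[0,s]$ need not contain the vertex and may be clipped at $0$ or $s$; the identity $a|C_j|(|A|+|C_j|)=2$ then refers to the unclipped length $\rho_2-\rho_1$, and in the clipped regime (relevant for the pair $(B_2,C_2)$ when $\rho_1<0$) one must keep the large factor $t-\rho_1$ when bounding $\int_A|f|\,d\nu$ from below, since it is comparable to $\rho_2-\rho_1$ and cancels the same quantity in the denominator of $a$. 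With these refinements the regimes you list do close, and I found no configuration escaping your scheme, so I regard the proposal as a correct alternative proof in outline.
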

\begin{proof}
Without loss of generality we can
assume that the coefficient at the highest degree term of the polynomial $f$ is $1$.
Suppose first that $s\ge1$.

If the right-hand side is not zero, then there is a root of the polynomial $f$ on the interval $(0, s)$.
Let $\tau$  denote the minimal root in $(0, s)$  and let $\sigma$ be the second root.
Then on the interval $(0, \tau)$ the polynomial $f$ is either strictly positive or strictly negative,
hence,
$$
\int_0^se^{-t}I_{\{f\le-\varepsilon\}}dt\int_0^se^{-t}I_{\{f\ge\varepsilon\}}dt.
\le\int_\tau^\infty e^{-t}dt=e^{-\tau}
$$
Thus, we have to obtain the estimate
$$
\varepsilon e^{-\tau}\le c\int_0^se^{-t}I_{\{|f|<\varepsilon\}}dt\int_0^se^{-t}|f(t)|dt.
$$

Let us consider the case $\varepsilon<\frac{1}{2}(1+|\tau-\sigma|)$.
By Theorems \ref{t1.2} and \ref{t1.3}, given a polynomial $g$ of degree $d$,
linear functions $\ell_1, \ell_2$ and
a log-concave measure $\nu$, we have
$$
\|g\|_{L^1(\nu)}\ge c^d \|g\|_{L^2(\nu)},
$$
$$
c_1\|\ell_1\|^2_{L^2(\nu)}\|\ell_2\|^2_{L^2(\nu)}\ge\|\ell_1\ell_2\|^2_{L^2(\nu)}\ge c_2\|\ell_1\|^2_{L^2(\nu)}\|\ell_2\|^2_{L^2(\nu)},
$$
$$
\int_{A}|g|d\nu\ge c(d) \nu(A)^{1+d}\int|g|d\nu.
$$
Applying this inequalities to the functions $g=f$, $\ell_1(t)=t-\tau$, $\ell_2(t)=t-\sigma$
and to the measure $e^{-t}I_{\{t>0\}}dt$, we obtain that
\begin{multline*}
\int_0^se^{-t}|f(t)|dt\ge C(1-e^{-s})^3\int_0^\infty e^{-t}|f(t)|dt\\
\ge\widetilde{C}(1-e^{-s})^3
\biggl(\int_0^\infty(t-\sigma)^2e^{-t}dt\int_0^\infty(t-\tau)^2e^{-t}dt\biggr)^{1/2}\\
\ge\widehat{C}\bigl(1+(1-\tau)^2\bigr)^{1/2}\bigl(1+(1-\sigma)^2\bigr)^{1/2}.
\end{multline*}
In the last estimate we have used the inequality $1-e^{-s}>1-e^{-1}$ for $s\ge1$.
Let us pick a point $u$ such that $\tau\in[u, u+1]\subset[0, s]$. Then
$|t-\sigma|\le|\tau-\sigma|+1$ on $[u, u+1]$. Thus,
$$
\int_0^se^{-t}I_{\{|f|<\varepsilon\}}dt\ge e^{-\tau-1}\int_u^{u+1}
I_{\{|t-\tau|<\varepsilon(|\tau-\sigma|+1)^{-1}\}}dt\ge  e^{-\tau-1}\varepsilon(|\tau-\sigma|+1)^{-1}.
$$
Since
$$
\inf_{\tau, \sigma}\frac{\bigl(1+(1-\tau)^2\bigr)^{1/2}
\bigl(1+(1-\sigma)^2\bigr)^{1/2}}{|\tau-\sigma|+1}\ge1/2>0,
$$
the desired estimate is proved in this case.

Now let us consider the case $\varepsilon\ge\frac{1}{2}(1+|\tau-\sigma|)$.
If
$$\varepsilon\le  2(1-e^{-1})^{-1}\int_0^\infty e^{-t}|f(t)|dt,$$
then
$$
\int_0^se^{-t}I_{\{|f|<\varepsilon\}}dt\ge e^{-\tau-1}\int_u^{u+1}I_{\{|t-\tau|<\varepsilon(|\tau-\sigma|+1)^{-1}\}}dt
\ge e^{-\tau-1}\int_u^{u+1}I_{\{|t-\tau|<1/2\}}dt\ge  e^{-\tau-1}/2.
$$
Using the estimate
$$
\int_0^se^{-t}|f(t)|dt\ge C \int_0^\infty e^{-t}|f(t)|dt,
$$
we obtain that the desired inequality is valid in this case too.
If
$$\varepsilon\ge  2(1-e^{-1})^{-1}\int_0^\infty e^{-t}|f(t)|dt,$$
then, by Chebyshev's inequality
$$
\int_0^se^{-t}I_{\{|f|\ge\varepsilon\}}dt
\le\int_0^\infty e^{-t}I_{\{|f|\ge\varepsilon\}}dt\le \varepsilon^{-1}\int_0^\infty e^{-t}|f(t)|dt.
$$
On the other hand,
$$
\int_0^se^{-t}I_{\{|f|<\varepsilon\}}dt\ge 1-e^{-1}-\varepsilon^{-1}\int_0^\infty e^{-t}|f(t)|dt\ge(1-e^{-1})2^{-1}.
$$
Using the above mentioned estimate
$$
\int_0^se^{-t}|f(t)|dt\ge C \int_0^\infty e^{-t}|f(t)|dt
$$
and estimating one of the integrals on the left-hand side by one,
we conclude that the desired estimate is also valid in the considered case.

Now let $s<1$. In this case $e^{-1}\le e^{-t}\le1$ on $[0, s]$ and our estimate
in this case is equivalent to the following one:
$$
\varepsilon\int_0^sI_{\{f\le-\varepsilon\}}dt\int_0^sI_{\{f\ge\varepsilon\}}dt
\le c\int_0^sI_{\{|f|<\varepsilon\}}dt\int_0^s|f(t)|dt.
$$
Moreover, after a linear change of variables,
we can assume $s=1$, i.e., the desired inequality takes the form
$$
\varepsilon\int_0^1I_{\{f\le-\varepsilon\}}dt\int_0^1I_{\{f\ge\varepsilon\}}dt
\le c\int_0^1I_{\{|f|<\varepsilon\}}dt\int_0^1|f(t)|dt.
$$
It can be easily seen that if we multiply the integrand by $e^{-t}$ in each integral,
then each integral
will differ from the former one by the factor which belongs to $[1,e]$.
So, this case follows from the previous one for $s=1$.
\end{proof}

\begin{remark}\label{rem2.1}{\rm
The estimate from Lemma \ref{lem2.2}
does not extend to polynomials of the third degree.
It is sufficient to take
$$
s=\infty, \quad f(t)=(t+1)^2(t-a).
$$
}
\end{remark}

\begin{remark}\label{rem2.2}{\rm
If the estimate from Lemma \ref{lem2.2}
were true for
$$
\int_0^se^{-t}|f(t)-r|dt\quad \text{instead of}\quad \int_0^se^{-t}|f(t)|dt
$$
for every $r>0$, the isoperimetric inequality in the Cheeger
form would be true for the distribution of any polynomial,
see Lemma \ref{lem3.1} and Theorem \ref{t3.1}.
However, this inequality cannot be true for an arbitrary log-concave measure.
Indeed, this inequality for an arbitrary polynomial of
degree two would have implied the exponential integrability of such polynomials
with respect to an arbitrary log-concave measure (see \cite{BobkDiss}).
}
\end{remark}

\begin{theorem}\label{t2.2}
There is a constant $c$ such that, for every polynomial
of the second degree on $\mathbb{R}^n$ and every log-concave measure $\mu$,
the following inequality holds:
$$
\varepsilon\int I_{\{f\le-\varepsilon\}}d\mu\int I_{\{f\ge\varepsilon\}}d\mu
\le c\int I_{\{|f|<\varepsilon\}}d\mu\int|f|d\mu.
$$
\end{theorem}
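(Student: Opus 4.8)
The plan is to derive Theorem~\ref{t2.2} from the one-dimensional Lemma~\ref{lem2.2} via the localization principle of Theorem~\ref{t1.4}. One may assume $\varepsilon>0$, since otherwise the left-hand side vanishes. Let $c$ be the constant of Lemma~\ref{lem2.2} and let $C$ be a sufficiently large fixed numerical multiple of $c$. I would apply Theorem~\ref{t1.4} with $\alpha=\beta=1$ and with the functions $f_1=\varepsilon I_{\{f\le-\varepsilon\}}$, $f_2=I_{\{f\ge\varepsilon\}}$, $f_3=C I_{\{|f|<\varepsilon\}}$, $f_4=|f|$. Since $\{f\le-\varepsilon\}$ and $\{f\ge\varepsilon\}$ are closed and $\{|f|<\varepsilon\}$ is open, $f_1,f_2$ are upper semicontinuous and $f_3,f_4$ are lower semicontinuous, and all four are nonnegative. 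The conclusion of Theorem~\ref{t1.4} for these data is precisely the inequality of Theorem~\ref{t2.2} (with constant $C$ in place of $c$), so it suffices to verify the hypothesis of Theorem~\ref{t1.4}: for every compact interval $\Delta\subset\mathbb{R}^n$ and every measure $\nu$ on $\Delta$ with density $e^{\ell}$, $\ell$ affine, one has $\bigl(\int_\Delta f_1\,d\nu\bigr)\bigl(\int_\Delta f_2\,d\nu\bigr)\le\bigl(\int_\Delta f_3\,d\nu\bigr)\bigl(\int_\Delta f_4\,d\nu\bigr)$.

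To check this hypothesis I would parametrize $\Delta$ affinely by a variable $t$ ranging over a compact interval $[0,T]$; then the restriction of $f$ to $\Delta$ is a polynomial $g(t)$ of degree at most two, the set $\{|f|<\varepsilon\}$ corresponds to $\{|g|<\varepsilon\}$ (and likewise for the two half-spaces), and $\nu$ transforms into a measure with affine log-density on $[0,T]$. A further affine change of $t$ does not change the degree of $g$ nor the number $\varepsilon$, and only multiplies $\nu$ by a positive constant that cancels from the two sides; after such a change the log-density is either $-t$ on a segment $[0,s]$ or identically constant. In the first case the desired inequality is exactly Lemma~\ref{lem2.2} for $g$ (with the constant $c\le C$). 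In the second case I would rescale the segment to $[0,1]$ and use that on $[0,1]$ the weight $e^{-t}$ lies in $[e^{-1},1]$, so Lebesgue measure and $e^{-t}\,dt$ are comparable up to the factor $e$; this reduces the claim to Lemma~\ref{lem2.2} on $[0,1]$ with $c$ replaced by $e^{2}c\le C$ (this is the reduction already carried out at the end of the proof of Lemma~\ref{lem2.2}). If $g$ has degree strictly less than two, then either $g$ is constant, whence $I_{\{g\le-\varepsilon\}}I_{\{g\ge\varepsilon\}}\equiv 0$ and the left-hand side is zero, or $g$ is affine, in which case I would apply the degree-two case to $g+\delta t^{2}$ and let $\delta\to 0^{+}$: since the level sets $\{g=\pm\varepsilon\}$ are finite, hence Lebesgue-null, the four integrals converge to the ones formed from $g$ and the inequality survives. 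A degenerate $\Delta$ is subsumed in the constant case.

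Granting the hypothesis, Theorem~\ref{t1.4} yields $\varepsilon\int I_{\{f\le-\varepsilon\}}\,d\mu\int I_{\{f\ge\varepsilon\}}\,d\mu\le C\int I_{\{|f|<\varepsilon\}}\,d\mu\int|f|\,d\mu$, which is the assertion of the theorem. The one step I expect to require the most care is the reconciliation of the abstract one-dimensional hypothesis of the localization theorem with the specific normalization used in Lemma~\ref{lem2.2}: one must reduce an arbitrary affine log-density on an arbitrary interval to the weight $e^{-t}$ on $[0,s]$ and separately dispose of the constant-density and sub-quadratic restrictions. Once this bookkeeping is done, the argument is just a concatenation of Theorem~\ref{t1.4} and Lemma~\ref{lem2.2}.
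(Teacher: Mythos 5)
Your proposal is correct and follows essentially the same route as the paper: apply the localization Theorem \ref{t1.4} to $f_1=\varepsilon I_{\{f\le-\varepsilon\}}$, $f_2=I_{\{f\ge\varepsilon\}}$, $f_3=cI_{\{|f|<\varepsilon\}}$, $f_4=|f|$ and reduce the resulting one-dimensional inequality with affine log-density to Lemma \ref{lem2.2} by an affine change of variables. The only difference is that you spell out the bookkeeping (constant density, degenerate restrictions of degree less than two) that the paper dismisses with ``by a linear change of variables,'' which is fine but not a different argument.
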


\begin{proof}
The functions $f_1=\varepsilon I_{\{f\le-\varepsilon\}}$, $f_2=I_{\{f\ge\varepsilon\}}$
are upper semi-continuous and
the functions $f_3=cI_{\{|f|<\varepsilon\}}$ and $f_4=|f|$ are lower semi-continuous.
Hence one can apply the localization lemma.
Thus, it is sufficient to prove the following inequality:
$$
\varepsilon\int_s^{r}e^{\ell(t)}I_{\{f\le-\varepsilon\}}dt\int_s^{r}e^{\ell(t)}I_{\{f\ge\varepsilon\}}dt
\le c\int_s^{r}e^{\ell(t)}I_{\{|f|<\varepsilon\}}dt\int_s^{r}e^{\ell(t)}|f|dt,
$$
where $\ell$ is an affine function.
By a linear change of variables this estimate can be reduced to the estimate
from Lemma \ref{lem2.2}.
Hence the theorem is proved.
\end{proof}

\begin{corollary}\label{c2.2}
There is a constant $C>0$ such that for every polynomial of degree $2$
on $\mathbb{R}^n$ and every log-concave measure $\mu$
the following estimate holds:
$$
\mu\{|f-m_f|<\varepsilon\}\int|f-m_f|d\mu\ge C\varepsilon \quad \text{for}\ \varepsilon<\alpha_f
$$
\end{corollary}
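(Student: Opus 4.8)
The plan is to derive Corollary \ref{c2.2} from Theorem \ref{t2.2} together with Corollary \ref{c2.1}. Throughout I may assume $m_f=0$, so that the target inequality reads $\mu(|f|<\varepsilon)\,\alpha_f\ge C\varepsilon$ for $\varepsilon<\alpha_f$. By Theorem \ref{t2.2} applied to $f$ (which has degree two),
\[
\varepsilon\,\mu(f\le-\varepsilon)\,\mu(f\ge\varepsilon)\le c\,\mu(|f|<\varepsilon)\,\alpha_f,
\]
so it suffices to bound $\mu(f\le-\varepsilon)\,\mu(f\ge\varepsilon)$ from below by an absolute positive constant for all $\varepsilon<\alpha_f$. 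Here is where Lemma \ref{lem2.1} and Corollary \ref{c2.1} enter: they guarantee a dimension-free constant $c(d)>0$ (with $d=2$) such that $\mu(f\ge\varepsilon\alpha_f)\ge c(d)$ for all $\varepsilon\in[0,s(d)]$, and also $\mu(f<0)\ge c(d)$, $\mu(f>0)\ge c(d)$.

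The main subtlety is that Lemma \ref{lem2.1} only controls $\mu(f\ge\varepsilon\alpha_f)$ for \emph{small} $\varepsilon$ (below the threshold $s(d)$), whereas Corollary \ref{c2.2} asks for the estimate up to $\varepsilon<\alpha_f$. So I would split into two regimes. First, for $\varepsilon\le s(2)\alpha_f$: apply Lemma \ref{lem2.1} to $f$ to get $\mu(f\ge\varepsilon)\ge c(2)$, and apply Lemma \ref{lem2.1} to $-f$ (also a polynomial of degree two, with $\alpha_{-f}=\alpha_f$ and $m_{-f}=0$) to get $\mu(-f\ge\varepsilon)=\mu(f\le-\varepsilon)\ge c(2)$; hence the product is at least $c(2)^2$, and the displayed inequality above gives the claim with $C=c(2)^2/c$. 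Second, for $s(2)\alpha_f\le\varepsilon<\alpha_f$: now I cannot use the two-sided control, but in this range $\mu(|f|<\varepsilon)$ is already bounded below, since
\[
\mu(|f|<\varepsilon)\ge\mu(|f|<s(2)\alpha_f)\ge\mu(f\ge s(2)\alpha_f)\ \wedge\ \ \text{— better:}\ \ \mu(|f|<\varepsilon)\ge 1-\mu(f\le-\varepsilon)-\mu(f\ge\varepsilon),
\]
and one can instead argue directly: for $\varepsilon\ge s(2)\alpha_f$ we have $\mu(|f|<\varepsilon)\,\alpha_f\ge\mu(|f|<s(2)\alpha_f)\,\alpha_f\ge c'\alpha_f\ge c' s(2)^{-1}\varepsilon\cdot s(2)=c'\varepsilon$ once we know $\mu(|f|<s(2)\alpha_f)\ge c'>0$ for some absolute $c'$. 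That last bound follows again from the small-$\varepsilon$ case already handled (take $\varepsilon=s(2)\alpha_f/2$, say), or simply from the first regime's conclusion $\mu(|f|<\varepsilon_0)\ge C\varepsilon_0/\alpha_f$ with $\varepsilon_0=s(2)\alpha_f/2$, giving $\mu(|f|<s(2)\alpha_f)\ge C s(2)/2$.

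In summary: apply Theorem \ref{t2.2}, reduce to bounding the product $\mu(f\le-\varepsilon)\mu(f\ge\varepsilon)$, use Lemma \ref{lem2.1} for $f$ and for $-f$ in the small-$\varepsilon$ regime, and bootstrap to the full range $\varepsilon<\alpha_f$ by monotonicity of $\varepsilon\mapsto\mu(|f|<\varepsilon)$ together with the already-obtained lower bound at one fixed proportion of $\alpha_f$. The only place requiring care is making all constants depend on $d=2$ only (hence absolute) and not on $\mu$ or $n$ — but this is exactly what Lemma \ref{lem2.1}, Corollary \ref{c2.1} and Theorem \ref{t2.2} provide, so no genuine obstacle remains.
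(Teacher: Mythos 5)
Your proposal is correct and follows essentially the same route as the paper, whose proof of Corollary \ref{c2.2} is exactly the combination of Theorem \ref{t2.2} with Lemma \ref{lem2.1} (applied to $f-m_f$ and $-(f-m_f)$). Your additional bootstrap handling the range $s(2)\alpha_f\le\varepsilon<\alpha_f$ via monotonicity of $\varepsilon\mapsto\mu(|f-m_f|<\varepsilon)$ is just the detail the paper leaves implicit, and it is carried out correctly.
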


\begin{proof}
This estimate follows from the previous theorem and Lemma \ref{lem2.1}.
\end{proof}

\section{The isoperimetric and Poincar\'e inequalities}

The following lemma is an analog of Lemma \ref{lem2.2}
for polynomials of an arbitrary degree,
but for measures with a density of the form $t^n$
on some interval instead of $e^{-t}$.

\begin{lemma}\label{lem3.1}
Let $s\ge0$, $n\in\mathbb{N}$.
Then there is a constant $c(d,n)$ depending only on $d$ and $n$ such that for every polynomial $f$
of degree $d$ on the real line one has
$$
\varepsilon\int_s^{s+1}t^nI_{\{f\le-\varepsilon\}}dt\int_s^{s+1}t^nI_{\{f\ge\varepsilon\}}dt
\le c(d,n)\int_s^{s+1}t^nI_{\{|f|<\varepsilon\}}dt\int_s^{s+1}t^n|f(t)|dt.
$$
\end{lemma}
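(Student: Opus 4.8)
The plan is to follow the scheme of the proof of Lemma~\ref{lem2.2} with the measure $e^{-t}I_{\{t>0\}}\,dt$ replaced by $\mu_0:=t^{n}I_{[s,s+1]}(t)\,dt$. The point that makes this possible is that $t^{n}$ is log-concave on $(0,\infty)$ and $[s,s+1]$ is an interval, so $\mu_0$ is itself a finite log-concave measure on $\mathbb R$, and hence Theorems~\ref{t1.2} and~\ref{t1.3} are available for it. By homogeneity (replace $f$ by $f/\lambda$ and $\varepsilon$ by $\varepsilon/\lambda$) we may assume $f$ is monic. If one of the factors on the right-hand side is zero, then either $|f|\ge\varepsilon$ on all of $[s,s+1]$ or $f\equiv0$ there; in both cases $f$ keeps a constant sign on $[s,s+1]$, so one of the factors on the left vanishes and the inequality is trivial. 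Thus we may assume $f$ monic and sign-changing on $[s,s+1]$; write $f(t)=\prod_{i=1}^{d}(t-z_i)$ and set $M:=\mu_0([s,s+1])$, $\Phi:=\int|f|\,d\mu_0$, $\widetilde\mu_0:=M^{-1}\mu_0$, and let $m,\sigma^{2}$ be the mean and variance of $\widetilde\mu_0$, $A:=\{f\le-\varepsilon\}$, $B:=\{f\ge\varepsilon\}$.

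Three auxiliary estimates carry the argument. (1) Applying Jensen's inequality and then Theorem~\ref{t1.2} (with $q=1$, degree $2$) to each factor $(t-z_i)(t-\overline{z_i})$ or $(t-z_i)^{2}$ gives
$$
\Phi=M\|f\|_{L^{1}(\widetilde\mu_0)}\ge M\|f\|_{L^{0}(\widetilde\mu_0)}=M\prod_{i=1}^{d}\|t-z_i\|_{L^{0}(\widetilde\mu_0)}\ge c_1(d)\,M\prod_{i=1}^{d}\|t-z_i\|_{L^{2}(\widetilde\mu_0)},
$$
where $\|t-z\|_{L^{2}(\widetilde\mu_0)}^{2}=\sigma^{2}+|m-z|^{2}$; here $\sigma^{2}\ge v(n)>0$ uniformly in $s\ge0$, because $\sigma^{2}$ is a continuous function of $s\in[0,\infty]$ (as $s\to\infty$, $\widetilde\mu_0$ tends to the uniform law on an interval of length $1$ and $\sigma^{2}\to1/12$) which is everywhere positive. (2) Since $A$ and $B$ are nonempty, $\varepsilon<\|f\|_{L^{\infty}[s,s+1]}$; writing $t=s+u$ and using $(s+u)^{n}\ge u^{n}$ on $[0,1]$ together with the equivalence of the norms $g\mapsto\int_{0}^{1}u^{n}|g|\,du$ and $g\mapsto\|g\|_{L^{\infty}[0,1]}$ on the (finite-dimensional) space of polynomials of degree $\le d$, we get $\varepsilon<\|f\|_{L^{\infty}[s,s+1]}\le C_2(d,n)\,\Phi$. (3) For $z_{i_0}=\tau$ a root of $f$ and $|t-\tau|\le1$ one has $|f(t)|\le|t-\tau|\prod_{i\ne i_0}(|t-\tau|+|\tau-z_i|)$, and $|\tau-z_i|\le|\tau-m|+|m-z_i|\le1+\|t-z_i\|_{L^{2}(\widetilde\mu_0)}\le K(n)\|t-z_i\|_{L^{2}(\widetilde\mu_0)}$ (using $\tau,m\in[s,s+1]$ and $\|t-z_i\|_{L^{2}(\widetilde\mu_0)}\ge\sqrt{v(n)}$); combined with (1) this yields, for $\rho\le1$,
$$
|f(t)|\le\rho\,B(d,n)\,\Phi/M\qquad\text{on }\{|t-\tau|<\rho\}.
$$

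Now split into two cases. If $\varepsilon\ge2\Phi/M$, then by Chebyshev's inequality $\mu_0(\{|f|\ge\varepsilon\})\le\Phi/\varepsilon\le M/2$, so $\mu_0(\{|f|<\varepsilon\})\ge M/2$, while $\mu_0(A)\mu_0(B)\le\bigl(\mu_0(\{|f|\ge\varepsilon\})/2\bigr)^{2}\le\Phi^{2}/(4\varepsilon^{2})$, whence $\varepsilon\,\mu_0(A)\mu_0(B)\le\Phi^{2}/(4\varepsilon)\le\Phi M/8$ and the inequality holds with constant $1/4$. If $\varepsilon<2\Phi/M$, then $\rho:=\varepsilon M/\bigl(2B(d,n)\Phi\bigr)<1$, so by (3) $\{|f|<\varepsilon\}\supseteq(\tau-\rho,\tau+\rho)$ for every sign-changing root $\tau$. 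Among all sign-constancy intervals of $f$ in $[s,s+1]$ let $\Delta^{*}$ be one of largest $\mu_0$-measure $\mu^{*}$; since these intervals alternate between those that may meet $A$ and those that may meet $B$, and there are at most $d+1$ of them, we get $\mu_0(A)\le(d+1)\mu^{*}$ and $\mu_0(B)\le(d+1)\mu^{*}$, hence $\mu_0(A)\mu_0(B)\le(d+1)^{2}\mu^{*}M$. Let $\tau$ be an endpoint of $\Delta^{*}$ that is a root of $f$ (there is one), chosen to be the right endpoint when possible; by monotonicity of $t^{n}$ the part of $\Delta^{*}$ within distance $\rho$ of $\tau$ then has $\mu_0$-measure at least $\min(1,\rho/|\Delta^{*}|)\mu^{*}\ge\rho\mu^{*}$, and it lies in $\{|f|<\varepsilon\}$, so $\mu_0(\{|f|<\varepsilon\})\ge\rho\mu^{*}$. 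Using $\varepsilon M/\rho=2B(d,n)\Phi$,
$$
\varepsilon\,\mu_0(A)\mu_0(B)\le\varepsilon(d+1)^{2}\mu^{*}M=(d+1)^{2}(\varepsilon M/\rho)\,\rho\mu^{*}\le 2B(d,n)(d+1)^{2}\,\Phi\,\mu_0(\{|f|<\varepsilon\}),
$$
so $c(d,n)=\max\bigl(1/4,\,2B(d,n)(d+1)^{2}\bigr)$ works. The one place needing extra care — and the main obstacle — is when $\Delta^{*}$ is the rightmost interval $(r,s+1)$ (so $\tau=r$ is its left endpoint) and is long, i.e. $r$ is close to $s$: then the $\rho$-piece of $\Delta^{*}$ adjacent to $\tau$ sits where $t^{n}$ is smallest, so one only has $\mu_0(\{|f|<\varepsilon\})\ge\rho\,r^{n}$, but $f$ keeps one sign on $(r,s+1)$, which confines one of $A,B$ to $[s,r]$ and gives $\mu_0(A)\mu_0(B)\le M\int_{s}^{r}t^{n}\,dt\le Mr^{\,n+1}$; for $s<1$ this yields $\varepsilon\,\mu_0(A)\mu_0(B)\le(\varepsilon Mr/\rho)\,\mu_0(\{|f|<\varepsilon\})\le4B(d,n)\,\Phi\,\mu_0(\{|f|<\varepsilon\})$ since $r<2$, while for $s\ge1$ the weight $t^{n}$ is comparable to $s^{n}$ on $[s,s+1]$, so $\mu_0(\{|f|<\varepsilon\})\ge\rho s^{n}\ge2^{-n}\rho M$ and the original chain applies directly; the leftmost interval is treated the same way. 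Enlarging $c(d,n)$ by a factor $2^{n}$ to absorb these sub-cases completes the proof.
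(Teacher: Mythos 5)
Your proof is correct, and its engine is the same as the paper's: you apply Theorem \ref{t1.2} to the log-concave probability measure proportional to $t^nI_{[s,s+1]}\,dt$ to get $\int|f|\,d\widetilde\mu_0\ge c(d,n)\prod_i(\sigma^2+|m-z_i|^2)^{1/2}$, use a lower bound on the variance that is uniform in $s$, deduce that $|f|<\varepsilon$ on a $\rho$-neighbourhood of a root with $\rho$ of order $\varepsilon M/\Phi$, and dispose of the regime $\varepsilon\ge 2\Phi/M$ by Chebyshev --- all exactly as in the paper's proof (which, incidentally, cites Theorem \ref{t1.1} at this step but really uses Theorem \ref{t1.2}). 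The only genuine divergence is the bookkeeping in the main case: the paper anchors at the largest root $\tau\in[s,s+1]$, notes that one of the sets $\{f\le-\varepsilon\}$, $\{f\ge\varepsilon\}$ is confined to $[s,\tau]$, and compares $\int_s^\tau t^n\,dt$ with the weight of the $\rho$-neighbourhood of $\tau$ by cases on the position of $\tau$; you instead anchor at the sign-constancy interval $\Delta^*$ of largest $\mu_0$-measure, bound $\mu_0(A)\mu_0(B)\le (d+1)^2\mu^*M$, and use monotonicity of $t^n$ to show the $\rho$-piece of $\Delta^*$ adjacent to a root carries mass at least $\rho\mu^*$, falling back on the paper's confinement trick only in your bad sub-case (rightmost interval with the root at its left end). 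The two routes prove the same estimate; yours trades the paper's case analysis on the location of $\tau$ for the $(d+1)$-counting of sign intervals. Three loose ends are trivially fixable: (i) you need $B(d,n)\ge 1$ so that $\rho<1$, which you can force by enlarging $B(d,n)$; (ii) in the bad sub-case the bounds $\mu_0(\{|f|<\varepsilon\})\ge\rho r^n$ (and $\ge\rho s^n$ for $s\ge1$) presuppose $\rho\le|\Delta^*|$ --- when $\rho>|\Delta^*|$ all of $\Delta^*$ lies in $\{|f|<\varepsilon\}$, so $\mu_0(\{|f|<\varepsilon\})\ge\mu^*\ge\rho\mu^*$ and your generic chain already applies, so one sentence closes this; (iii) your estimate (2) is never used and can be deleted.
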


\begin{proof}
Without loss of generality we can
assume that the coefficient at the highest degree term of the polynomial $f$ is $1$,
i.e., $f$ is of the form
$$
f(t)=\prod_{i=1}^d(t-t_i).
$$
Let
$$
\mu_s(dt):=\biggl(\int_s^{s+1}t^ndt\biggr)^{-1}I_{[s,s+1]}t^ndt,$$
$$
m_s:=\int t\mu_s(dt), \quad \sigma^2_s:=\int t^2\mu_s(dt)-m_s^2.
$$
Let
$$
c_1(n)=\inf_s\sigma^2_s.
$$
Note that $c_1(n)>0$, since the limit at infinity of $\sigma^2_s$ is not zero.
First we consider the case $\varepsilon<2\int|f|d\mu_s$.
Let
$$
\tau:=\max\{t_i: t_i\in[s, s+1]\}.
$$
Then on the interval $(\tau, s+1)$ the polynomial $f$ is either strictly positive or strictly negative.
Hence,
$$
\int_s^{s+1}t^nI_{\{f\le-\varepsilon\}}dt\int_s^{s+1}t^nI_{\{f\ge\varepsilon\}}dt
\le\int_s^{s+1}t^ndt\int_s^\tau t^ndt.
$$
Thus, it is sufficient to prove the estimate
$$
\varepsilon\int_s^\tau t^ndt\le c(d,n)\int_s^{s+1}t^nI_{\{|f|<\varepsilon\}}dt\int|f|d\mu_s.
$$
Applying Theorem \ref{t1.1} to the polynomial $f$ and the measure $\mu_s$, we obtain
\begin{multline*}
\|f\|_{L^1(\mu_s)}^2\ge c^{-2d}\|f\|_{L^2(\mu_s)}^2\ge
c^{-2d}\|f^2\|_{L^0(\mu_s)}=c^{-2d}\prod_{i=1}^d\|(t-t_i)^2\|_{L^0(\mu_s)}\\
\ge c^{-2d}(2c)^{-2d}\prod_{i=1}^d\|(t-t_i)^2\|_{L^1(\mu_s)}=
c^{-2d}(2c)^{-2d}\prod_{i=1}^d(\sigma^2_s+|m_s-t_i|^2).
\end{multline*}
Let $t_j$ be a root of the polynomial $f$ such that $\alpha_j:=\inf\limits_{t\in[s, s+1]}|t_j-t|>1$. Then
$$|f(t)|\le (\alpha_j+1)\prod\limits_{i\ne j}|t-t_i|$$
on $[s, s+1]$, while
$\sigma^2_s+|m_s-t_j|^2\ge\alpha_j^2$.
Let $t_k$ be a root of the polynomial $f$ such that
$\alpha_k:=\inf\limits_{t\in[s, s+1]}|t_k-t|\le1$.
Then $|f(t)|\le 2\prod\limits_{i\ne k}|t-t_i|$ on $[s, s+1]$,
while
$\sigma^2_s+|m_s-t_k|^2\ge c_1(n)$.
Applying these estimates several times, we obtain that
$$
|f(t)|\le 2^d\Bigl(\prod\limits_{\alpha_j>1}(\alpha_j+1)\Bigr)|t-\tau|\le4^d\Bigl(\prod\limits_{\alpha_j>1}\alpha_j\Bigr)|t-\tau|
$$
on $[s, s+1]$, while
$$
\prod\limits_{i=1}^d(\sigma^2_s+|m_s-t_i|^2)\ge(c_1(n))^d\prod\limits_{\alpha_j>1}\alpha^2_j.
$$
Let $R:=\prod\limits_{\alpha_j>1}\alpha_j$. Then it is sufficient to prove the estimate
$$
\varepsilon\int_s^\tau t^ndt\le c(d,n)\int_s^{s+1}t^nI_{\{|t-\tau|<\varepsilon4^{-d}R^{-1}\}}dt\int|f|d\mu_s.
$$
If $\varepsilon4^{-d}R^{-1}>1/4$, then
$$
\varepsilon\int_s^\tau t^ndt\le2\int|f|d\mu_s(n+1)^{-1}((s+1)^{n+1}-s^{n+1}),
$$
$$
\int_s^{s+1}t^nI_{\{|t-\tau|<\varepsilon4^{-d}R^{-1}\}}dt
\int|f|d\mu_s\ge(n+1)^{-1}((s+1/4)^{n+1}-s^{n+1})\int|f|d\mu_s.
$$
Obviously, $\inf\limits_s((s+1/4)^{n+1}-s^{n+1})((s+1)^{n+1}-s^{n+1})^{-1}>0$, thus,
in the case where $\varepsilon4^{-d}R^{-1}>1/4$ the desired inequality is proved.

Let now $\varepsilon4^{-d}R^{-1}\le1/4$. It is sufficient to prove the inequality
$$
\varepsilon\int_s^\tau t^ndt\le c(d,n)R\int_s^{s+1}t^nI_{\{|t-\tau|<\varepsilon4^{-d}R^{-1}\}}dt,
$$
since $\int|f|d\mu_s\ge \delta(d,n)R$. If $\tau<s+3/4$, then
$$
\int_s^{s+1}t^nI_{\{|t-\tau|<\varepsilon4^{-d}R^{-1}\}}dt\ge
\tau^n\varepsilon4^{-d}R^{-1},
$$
since $[\tau,\tau+\varepsilon 4^{-d} R^{-1}]\subset[s,s+1]$,
while
$$
\int_s^\tau t^ndt\le \tau^n (\tau-s) \le \tau^n.
$$
Thus, in this case the desired inequality is also proved.
Let now $\tau\ge s+3/4$. Then
$$
\int_s^{s+1}t^nI_{\{|t-\tau|<\varepsilon4^{-d}R^{-1}\}}dt\ge
(\tau-\varepsilon4^{-d}R^{-1})^n\varepsilon4^{-d}R^{-1},
$$
since $[\tau,\tau-\varepsilon 4^{-d} R^{-1}]\subset[s,s+1]$.
The last expression can be estimated by
$$
(\tau-1/4)^n\varepsilon4^{-d}R^{-1}\ge(s+1/2)^n\varepsilon4^{-d}R^{-1}.
$$
Moreover,
$$
\int_s^\tau t^ndt=(n+1)^{-1}(\tau^{n+1}-s^{n+1})\le(s+1)^n
$$
and, since $s+1/2\ge 1/2(s+1)$, the desired inequality is proved in this case too.

It remains to consider the case where $\varepsilon\ge 2\int|f|d\mu_s$.
Applying Chebyshev's inequality, we obtain
$$
\mu_s(|f|\ge\varepsilon)\le \varepsilon^{-1}\int|f|d\mu_s\le 1/2,
$$
hence
$$
\mu_s(|f|<\varepsilon)\ge1-1/2=1/2.
$$
Thus, we have
$$
\varepsilon\mu_s\{f\le-\varepsilon\}\mu_s\{f\ge\varepsilon\}
\le c(d,n)\mu_s\{|f|<\varepsilon\}\int|f|d\mu_s,
$$
which is equivalent to the announced estimate.
\end{proof}

\begin{lemma}[The reverse Poincar\'e inequality]\label{lem3.2}
Let $f$ be a polynomial of degree $d$ on the real line and let $\mu$
be a log-concave measure on the real line.
Then the following inequality holds:
$$
\sigma(\mu)\|f'\|_{L^2(\mu)}\le (Cd)^d\|f\|_{L^2(\mu)},
$$
where $C$ is an absolute constant, $\sigma^2(\mu)$ is the variance of the measure $\mu$.
\end{lemma}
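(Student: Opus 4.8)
The plan is to pass to the logarithmic norm $\|\cdot\|_{L^0(\mu)}$, where the ratio $f'/f$ has a convenient partial-fraction form, and then to reduce to a one-dimensional small-ball estimate. We may assume $\sigma(\mu)>0$ and $\deg f=d\ge1$, the remaining cases being trivial; then $\mu$ is absolutely continuous and the roots of $f$ form a $\mu$-null set. By Theorem~\ref{t1.2} with $q=2$ one has $\|f'\|_{L^2(\mu)}\le (C_2 d)^d\|f'\|_{L^0(\mu)}$ for an absolute constant $C_2$, while $\|f\|_{L^0(\mu)}\le\|f\|_{L^2(\mu)}$ by Jensen's inequality. Hence it suffices to prove
$$
\sigma(\mu)\,\|f'\|_{L^0(\mu)}\le C_1^{\,d}\,\|f\|_{L^0(\mu)}
$$
for an absolute constant $C_1$.

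Write $f(t)=a\prod_{i=1}^d(t-z_i)$ with $z_i\in\mathbb{C}$, so that $f'(t)/f(t)=\sum_{i=1}^d(t-z_i)^{-1}$ for $\mu$-a.e.\ $t$. The triangle inequality then gives
$$
\ln\frac{|f'(t)|}{|f(t)|}\le\ln\!\Bigl(\sum_{i=1}^d\frac{1}{|t-z_i|}\Bigr)\le\ln d+\max_{1\le i\le d}\bigl(-\ln|t-z_i|\bigr)\le\ln d-\ln\sigma(\mu)+\sum_{i=1}^d\ln^{+}\frac{\sigma(\mu)}{|t-z_i|},
$$
where in the last step one writes $-\ln|t-z_i|=-\ln\sigma(\mu)+\ln\bigl(\sigma(\mu)/|t-z_i|\bigr)$ and bounds the maximum of the shifted terms by the sum of their positive parts; the crucial point is that $-\ln\sigma(\mu)$ leaves the maximum and so is charged only once. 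Integrating against $\mu$ and exponentiating,
$$
\sigma(\mu)\,\|f'\|_{L^0(\mu)}\le d\,\|f\|_{L^0(\mu)}\prod_{i=1}^d\exp\!\Bigl(\int\ln^{+}\frac{\sigma(\mu)}{|t-z_i|}\,d\mu\Bigr),
$$
so it remains to bound each factor $\exp\!\bigl(\int\ln^{+}\tfrac{\sigma(\mu)}{|t-z_i|}\,d\mu\bigr)$ by an absolute constant.

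Since $|t-z|\ge|t-\operatorname{Re}z|$, it is enough to treat a real point $u$. By the layer-cake formula, $\int\ln^{+}\tfrac{\sigma(\mu)}{|t-u|}\,d\mu=\int_0^\infty\mu\bigl(|t-u|<\sigma(\mu)e^{-\lambda}\bigr)\,d\lambda$. The Carbery--Wright inequality \cite{CarWr} applied to the degree-one polynomial $t-u$ gives $\mu(|t-u|\le r)\le C_{\mathrm{CW}}r/\|t-u\|_{L^1(\mu)}$, and $\|t-u\|_{L^1(\mu)}\ge\|t-u\|_{L^0(\mu)}\ge(2c)^{-1}\|t-u\|_{L^2(\mu)}\ge(2c)^{-1}\sigma(\mu)$ by Jensen, Theorem~\ref{t1.2} in degree one, and the fact that the second moment about any point dominates the variance. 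Hence $\mu(|t-u|<r)\le 2cC_{\mathrm{CW}}\,r/\sigma(\mu)$, so the integral above is at most $\int_0^\infty\min(1,2cC_{\mathrm{CW}}e^{-\lambda})\,d\lambda\le\ln^{+}(2cC_{\mathrm{CW}})+1$, an absolute constant. Substituting this back (and absorbing the harmless factor $d$) yields $\sigma(\mu)\|f'\|_{L^0(\mu)}\le C_1^{\,d}\|f\|_{L^0(\mu)}$, and the lemma follows.

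The only genuinely measure-theoretic input is the one-dimensional small-ball bound $\mu(|t-u|<r)\le C\,r/\sigma(\mu)$ --- equivalently, that a one-dimensional log-concave density is at most $\asymp\sigma(\mu)^{-1}$, which is the degree-one case of Carbery--Wright; everything else is the partial-fraction identity $f'/f=\sum(t-z_i)^{-1}$ together with the Khinchin-type norm equivalences of Theorem~\ref{t1.2}. I expect the only place a naive argument would lose a power of $\sigma(\mu)$ is the bookkeeping around the maximum, i.e.\ isolating $-\ln\sigma(\mu)$ so that it is not paid $d$ times.
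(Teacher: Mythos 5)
Your proof is correct, but it follows a genuinely different route from the paper's. The paper keeps everything at the level of $L^2$ and $L^0$ norms of products of linear factors: it takes $f$ monic and $\mu$ centered, expands $f'(t)=\sum_i\prod_{j\ne i}(t-t_j)$, applies Cauchy--Schwarz, and then uses Theorem~\ref{t1.2} together with the multiplicativity of the $L^0$-norm twice --- once to factor $\int\prod_{j\ne i}|t-t_j|^2\,d\mu\le(2cd)^{2d}\prod_{j\ne i}(\sigma^2(\mu)+|t_j|^2)$, and once more, after absorbing $\sigma^2(\mu)$ into the missing $i$-th factor, to reassemble $\prod_i(\sigma^2(\mu)+|t_i|^2)\le(2c)^{2d}\|f\|_{L^2(\mu)}^2$; the only input is Theorem~\ref{t1.2}. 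You instead pass to the logarithmic derivative $f'/f=\sum_i(t-z_i)^{-1}$, reduce the claim to an $L^0$-inequality via Theorem~\ref{t1.2}, and bound each root's contribution $\int\ln^{+}\bigl(\sigma(\mu)/|t-z_i|\bigr)\,d\mu$ by an absolute constant through the layer-cake formula and the degree-one Carbery--Wright small-ball estimate (equivalently, the fact that a one-dimensional log-concave density is $O(\sigma(\mu)^{-1})$); your bookkeeping that charges $-\ln\sigma(\mu)$ only once at the maximum is exactly the point where a naive version would lose powers of $\sigma(\mu)$, and it is done correctly. Your approach needs this extra one-dimensional small-ball input (which the paper quotes anyway in the introduction but does not use in this proof), while the paper's argument is purely algebraic given Theorem~\ref{t1.2}; in exchange, your root-by-root estimate avoids the Cauchy--Schwarz expansion and the double use of multiplicativity, and it localizes cleanly the role of each root. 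Both arguments produce a constant of the same $(Cd)^d$ shape, and your treatment of the degenerate cases ($\sigma(\mu)=0$, vanishing of $\|f'\|_{L^0(\mu)}$, integrability of $\ln|f|$ and $\ln|f'|$) is adequate, since Theorem~\ref{t1.2} already forces $\|\cdot\|_{L^0(\mu)}>0$ for nonzero polynomials when $\mu$ is nondegenerate.
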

\begin{proof}
Due to homogeneity we can assume that the polynomial $f$ is of the form
$$
f(t)=\prod_{i=1}^d(t-t_i).
$$
Also, without loss of generality we can assume that $\int t\mu(dt)=0$.
Applying the estimate of the $L^1$-norm of a polynomial via its $L^0$-norm
from Theorem \ref{t1.2}, using multiplicativity of the
$L^0$-norm and estimating the $L^0$-norm by the $L^1$-norm, we obtain
\begin{multline*}
\int t^2\mu(dt)\int (f'(t))^2\mu(dt)\le d\sum_{i=1}^d\int t^2\mu(dt)
\int\Bigl|\prod_{j\ne i}(t-t_j)\Bigr|^2\mu(dt)\\
\le d(2cd)^{2d}\sum_{i=1}^d\int t^2\mu(dt)\prod_{j\ne i}\int|t-t_j|^2\mu(dt)\\
=d(2cd)^{2d}\sum_{i=1}^d\int t^2\mu(dt)\prod_{j\ne i}\biggl(\int t^2\mu(dt)+|t_j|^2\biggr)\\
\le d^2(2cd)^{2d}\prod_{i=1}^d\biggl(\int t^2\mu(dt)+|t_i|^2\biggr)=\\
d^2(2cd)^{2d}\prod_{i=1}^d\int|t-t_i|^2\mu(dt)
\le d^2(4c^2d)^{2d}\int f^2\mu(dt),
\end{multline*}
as announced.
\end{proof}

\begin{lemma}\label{lem3.3}
Let $f$ be a polynomial of degree $d$ on $\mathbb{R}$. Suppose that $f$ vanishes
at some point $\tau\in[s, s+1]$, where $s\ge0$. Then
$$
\int_s^{s+1}|f(t)|t^ndt\le C(d,n)\int_s^{s+1} |f(t)-r|t^ndt
$$
for every number $r$, where the constant $C(d,n)$ depends only on $d$ and $n$.
In particular, $C(d,n)$ is independent of $r$.
\end{lemma}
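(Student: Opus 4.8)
The plan is to reduce the inequality to a single one-dimensional \emph{reverse Hölder estimate} for polynomials and then prove that estimate by an elementary scaling argument arranged so that the constant does not deteriorate as $s\to0$ or $s\to\infty$.

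First I would dispose of the trivial case $d=0$ (then $f$ is a constant vanishing at $\tau$, so $f\equiv0$) and, for $d\ge1$, fix $r$ and set $g:=f-r$, a polynomial of degree at most $d$. Since $f$ vanishes at $\tau\in[s,s+1]$, one has $\sup_{t\in[s,s+1]}|g(t)|\ge|g(\tau)|=|r|$, so by the triangle inequality it suffices to show that, for every polynomial $g$ of degree $\le d$ and every $s\ge0$,
$$
\sup_{t\in[s,s+1]}|g(t)|\;\le\;C(d,n)\,\Bigl(\int_s^{s+1}t^n\,dt\Bigr)^{-1}\int_s^{s+1}|g(t)|t^n\,dt ,
$$
with $C(d,n)$ depending only on $d$ and $n$: applied to $g=f-r$ this bounds $|r|\int_s^{s+1}t^n\,dt$ by $C(d,n)\int_s^{s+1}|f(t)-r|t^n\,dt$, and then $\int_s^{s+1}|f|t^n\,dt\le\int_s^{s+1}|f-r|t^n\,dt+|r|\int_s^{s+1}t^n\,dt$ gives the assertion with constant $1+C(d,n)$, manifestly independent of $r$.

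To prove the displayed estimate I would exploit finite-dimensionality of the space $\mathcal P_d$ of polynomials of degree $\le d$: all norms on it are equivalent, so there are constants $a_1(d),a_2(d,n)>0$ with $\sup_{[0,1]}|p|\le a_1(d)\int_0^1|p(u)|\,du$ and $\sup_{[0,1]}|p|\le a_2(d,n)\int_0^1|p(u)|u^n\,du$ for all $p\in\mathcal P_d$ (the right-hand sides are genuine norms, since a nonzero polynomial cannot vanish Lebesgue-a.e.\ and $u^n>0$ on $(0,1]$). Given $g$ and $s$ I would put $p(u):=g(s+u)$ and substitute $t=s+u$. In the regime $0\le s\le1$ I would use $(s+u)^n\ge u^n$ together with $\int_s^{s+1}t^n\,dt\le\int_0^2 v^n\,dv$ to reduce to the weighted estimate on $[0,1]$; in the regime $s\ge1$ I would instead use $t^n\ge s^n$ on $[s,s+1]$, $\int_s^{s+1}t^n\,dt\le(s+1)^n$, and $(s/(s+1))^n\ge2^{-n}$ to make the weighted average comparable to $\int_s^{s+1}|g(t)|\,dt$ and hence, via $a_1(d)$, to $\sup_{[s,s+1]}|g|$. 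Taking $C(d,n)$ to be the larger of the two resulting constants finishes the argument.

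The only genuine obstacle is keeping the constant uniform in $s$: near $s=0$ the weight $t^n$ degenerates at the left endpoint, and for large $s$ one must not pay a power of $s$. The two observations above are exactly what resolve this — for small $s$, monotonicity of $s\mapsto(s+u)^n$ collapses the whole family of slices onto the single fixed measure $u^n\,du$ on $[0,1]$; for large $s$, the bounded density ratio $((s+1)/s)^n\le 2^n$ collapses it onto a translate of Lebesgue measure on a unit interval. After that one only invokes the fixed, $s$-independent norm-equivalence constants on $\mathcal P_d$. (For $s\ge1$ one could alternatively note that the density $\propto t^n$ on $[s,s+1]$ is log-concave and appeal to Theorem \ref{t1.2}, but the direct estimate is shorter and displays the dependence on $n$ transparently.)
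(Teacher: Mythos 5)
Your argument is correct, and it takes a genuinely different route from the paper's. You keep the constant $r$ and use the vanishing of $f$ at $\tau$ only through $|r|=|f(\tau)-r|\le\sup_{[s,s+1]}|f-r|$, after which everything reduces (via the triangle inequality) to a single reverse H\"older bound
$\sup_{[s,s+1]}|g|\le C(d,n)\bigl(\int_s^{s+1}t^n\,dt\bigr)^{-1}\int_s^{s+1}|g(t)|\,t^n\,dt$
uniform in $s\ge0$ for $g\in\mathcal{P}_d$; your two-regime treatment (using $(s+u)^n\ge u^n$ for $s\le 1$, and the bounded density ratio $\bigl((s+1)/s\bigr)^n\le 2^n$ for $s\ge1$) does make the constant uniform, and the norm-equivalence step is legitimate because both the weighted and unweighted $L^1$ functionals over a unit interval are genuine norms on the finite-dimensional space $\mathcal{P}_d$. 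The paper instead removes $r$ by differentiation: it bounds $\|f\|_{L^1(\mu_s)}\le\|f\|_{L^\infty(\mu_s)}\le\|f'\|_{L^\infty(\mu_s)}$ (using $f(\tau)=0$ and the unit length of the interval), then controls $\|f'\|_{L^\infty(\mu_s)}$ by $\|f'\|_{L^2(\mu_s)}$ via the Carbery--Wright sup-norm estimate for densities proportional to $t^n$, and finally applies the reverse Poincar\'e inequality of Lemma \ref{lem3.2} to $f-r$ (whose derivative is again $f'$), together with the uniform lower bound on $\sigma_s$. What your approach buys is elementarity and self-containedness: it bypasses Carbery--Wright, log-concavity and Lemma \ref{lem3.2} altogether; what it gives up is explicitness of the constant, since $a_1(d)$ and $a_2(d,n)$ come from soft norm-equivalence, whereas the paper's chain produces constants of explicit $(Cd)^d$-type from inequalities already quoted in the text.
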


\begin{proof}
Let $\mu_s, \sigma_s$ be defined as in the proof of Lemma \ref{lem3.1}.
Note that it is sufficient to prove the inequality
$$
\int|f|d\mu_s\le C(d,n)\int|f-\lambda|d\mu_s.
$$
It can be easily verified that
$$
\|f\|_{L^\infty(\mu_s)}\le\|f'\|_{L^\infty(\mu_s)}.
$$
Recall the following inequality
(see Theorem 1 and Corollary after it in \cite{CarWr}) that
 holds
with some universal constant~$C$
 for any probability measure
on some interval with a density of the form $ct^n$ and for any polynomial $g$ of degree~$d$:
$$
\|g\|_\infty\le \biggl(C\max\biggl\{1,\frac{n+1}{d}\biggr\}\biggr)^{d}\|f'\|_2
$$
Applying this estimate to our measure $\mu_s$ and the polynomial $f'$, we obtain
$$
\|f'\|_{L^\infty(\mu_s)}\le \biggl(C\max\biggl\{1,\frac{n+1}{d-1}\biggr\}\biggr)^{d-1}\|f'\|_{L^2(\mu_s)}.
$$
We now use Lemma \ref{lem3.2}:
$$
\|f'\|_{L^2(\mu_s)}\le (Cd)^d\|f - \lambda\|_{L^2(\mu)}\sigma_s^{-1}.
$$
Since $\min\limits_s\sigma_s>0$ and $\|f\|_{L^1(\mu_s)}\le\|f\|_{L^\infty(\mu_s)}$, the lemma is proved.
\end{proof}

\begin{corollary}\label{c3.1}
Let $f$ be a polynomial of degree $d$ on the real line, $s\ge0$, $n\in\mathbb{N}$.
Then there is a constant $c(d,n)$ depending only on $d$ and $n$ such that for every number $r$
one has
$$
\varepsilon\int_s^{s+1}t^nI_{\{f\le-\varepsilon\}}dt\int_s^{s+1}t^nI_{\{f\ge\varepsilon\}}dt
\le c(d,n)\int_s^{s+1}t^nI_{\{|f|<\varepsilon\}}dt\int_s^{s+1}t^n|f(t) - r|dt.
$$
\end{corollary}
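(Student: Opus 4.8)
The plan is to combine Lemma \ref{lem3.1} with Lemma \ref{lem3.3}, splitting into two cases according to whether the polynomial $f$ has a zero inside the interval $[s,s+1]$. First I would dispose of the trivial case: if $f$ has no root in $[s,s+1]$, then $f$ has constant sign there, so at least one of the two sets $\{f\le-\varepsilon\}$ and $\{f\ge\varepsilon\}$ has empty intersection with $[s,s+1]$, and the left-hand side vanishes; the inequality then holds for any positive constant $c(d,n)$ (the right-hand side is nonnegative).

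The main case is when $f$ vanishes at some $\tau\in[s,s+1]$. Here I would apply Lemma \ref{lem3.1} directly to get
$$
\varepsilon\int_s^{s+1}t^nI_{\{f\le-\varepsilon\}}dt\int_s^{s+1}t^nI_{\{f\ge\varepsilon\}}dt
\le c(d,n)\int_s^{s+1}t^nI_{\{|f|<\varepsilon\}}dt\int_s^{s+1}t^n|f(t)|dt,
$$
and then invoke Lemma \ref{lem3.3}, whose hypothesis (vanishing of $f$ at a point of $[s,s+1]$) is exactly met, to replace the last factor:
$$
\int_s^{s+1}t^n|f(t)|dt\le C(d,n)\int_s^{s+1}t^n|f(t)-r|dt
$$
for every $r$, with $C(d,n)$ independent of $r$. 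Multiplying the two inequalities and absorbing the product of constants into a new $c(d,n)$ yields the claim in this case.

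There is genuinely no hard step here — the lemma is essentially a bookkeeping corollary combining two previously established results — but the one point that needs a moment's care is the case split: one must observe that the hypothesis of Lemma \ref{lem3.3} (a root of $f$ in $[s,s+1]$) is precisely what fails in the easy case, and that in the easy case the left-hand side is zero so no substitution is needed. I would also remark that, just as in the proof of Lemma \ref{lem3.1}, if $\int_s^{s+1}t^n|f(t)-r|\,dt=0$ then $f$ is constant equal to $r$ on $[s,s+1]$, and since $f$ is a polynomial it is identically $r$; but then either $r=0$, forcing the left-hand side to vanish, or $|r|>0$, and for $\varepsilon\le|r|$ one of the indicator integrals on the left vanishes while for $\varepsilon>|r|$ it is the indicator on the right that is the whole interval and one of $\{f\le-\varepsilon\}$, $\{f\ge\varepsilon\}$ is empty — so the degenerate sub-case causes no trouble. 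With these observations the corollary follows at once.
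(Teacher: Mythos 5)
Your argument is correct and coincides with the paper's own proof: the paper likewise observes that if $f$ has no zero in $[s,s+1]$ the left-hand side vanishes, and otherwise combines Lemma \ref{lem3.1} with Lemma \ref{lem3.3} (whose hypothesis is then satisfied) to replace $\int_s^{s+1}t^n|f(t)|\,dt$ by $C(d,n)\int_s^{s+1}t^n|f(t)-r|\,dt$. Your additional remarks on degenerate sub-cases are harmless but not needed.
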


\begin{proof}
If $f$ has no zeros in $[s, s+1]$, then the left-hand side vanishes and the inequality is obvious.
If $f$ has a root in $[s, s+1]$, then we can apply Lemmas \ref{lem3.1} and \ref{lem3.3}.
\end{proof}

\begin{theorem}\label{t3.1}
Let $K$ be a convex compact set in $\mathbb{R}^n$, let $\lambda_K$ be the normalized Lebesgue measure on $K$,
and let $f$ be a polynomial of degree $d$.
Let $\mathbb{R}=J_1\sqcup J_2\sqcup J_3$, where $J_i$ are disjoint measurable sets,
and let the distance between $J_1$ and $J_3$
be $\varepsilon>0$, i.e.,
$$
    \inf\limits_{x_1 \in J_1, x_2 \in J_2}|x_1-x_2|=\varepsilon>0.
$$
Then there is a constant $c(d,n)$ depending only on $d$ and $n$ such that
$$
\varepsilon\lambda_K(f\in J_1)\lambda_K(f\in J_3)\le c(d,n)\lambda_K(f\in J_2)\int_K|f-m_f|d\lambda_K,
$$
where $\displaystyle \ m_f=\int f d\lambda_K$.
\end{theorem}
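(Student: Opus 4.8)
\emph{Proof proposal.}
The plan is to reduce $J_1,J_2,J_3$ to a topologically convenient form, apply the localization lemma for convex bodies (Theorem~\ref{t1.5}) to pass to a one-dimensional needle inequality, and then derive that inequality from Corollary~\ref{c3.1} together with a combinatorial (charging) argument. First I would dispose of regularity. Write $\nu:=\lambda_K\circ f^{-1}$; we may assume $f$ non-constant (otherwise one of $\nu(f\in J_1),\nu(f\in J_3)$ vanishes and the claim is trivial), so $\nu$ is absolutely continuous on $\mathbb{R}$, hence outer regular. Given $\eta>0$ I pick an open $U_2\supseteq J_2$ with $\nu(U_2\setminus J_2)<\eta$ and set $F_1:=(\mathbb{R}\setminus U_2)\cap J_1$, $F_3:=(\mathbb{R}\setminus U_2)\cap J_3$. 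Then $F_1\cup F_3=\mathbb{R}\setminus U_2$ is closed and $\mathrm{dist}(F_1,F_3)\ge\varepsilon>0$, which forces $F_1$ and $F_3$ to be separately closed (if a sequence in $F_1$ converged to a point of $F_3$, the two sets would not be $\varepsilon$-separated). Proving the theorem for the partition $\mathbb{R}=F_1\sqcup U_2\sqcup F_3$ and letting $\eta\to0$, using $\nu(F_i)\ge\nu(J_i)-\eta$ and $\nu(U_2)\le\nu(J_2)+\eta$, recovers the general case; so henceforth $J_1,J_3$ are closed and $J_2$ is open.

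Next I would localize. The functions $f_1:=\varepsilon I_{\{f\in J_1\}}$, $f_2:=I_{\{f\in J_3\}}$ are upper semicontinuous, $f_3:=c(d,n)I_{\{f\in J_2\}}$, $f_4:=|f-m_f|$ are lower semicontinuous, and all are nonnegative; so, taking $\alpha=\beta=1$ in Theorem~\ref{t1.5}, it suffices to prove, for every interval $\Delta$ and every measure $\nu_\Delta$ with density $(\alpha t+\beta)^{n-1}$ on $\Delta$,
$$\varepsilon\,\nu_\Delta(f\in J_1)\,\nu_\Delta(f\in J_3)\le c(d,n)\,\nu_\Delta(f\in J_2)\int_\Delta|f-m_f|\,d\nu_\Delta$$
(the normalizing factor of $\lambda_K$ cancels, and $m_f$ here is just a fixed constant). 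Rescaling $\nu_\Delta$ to a probability measure and performing an affine change of the variable — splitting $\Delta$ at the zero of $\alpha t+\beta$ if it falls inside $\Delta$, which can occur only when $n-1$ is even — brings the needle into the exact setting of Corollary~\ref{c3.1}, with its parameter $n$ replaced by $n-1$.

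The remaining, and main, task is the needle inequality, which I would handle as follows. Let $\mu:=\nu_\Delta\circ f^{-1}$, a probability measure on $\mathbb{R}$ that is absolutely continuous, with density $\rho$ and distribution function $F$, whose support is the interval $f(\Delta)$ (in particular connected). Applying Corollary~\ref{c3.1} to the polynomial $f-v-\tfrac{\delta}{2}$ for each $v$ and each $\delta>0$, with the free parameter there chosen so that the last integrand becomes $|f-m_f|$, gives
$$\delta\,\mu\big((-\infty,v]\big)\,\mu\big([v+\delta,\infty)\big)\le \tilde c(d,n)\,\mu\big((v,v+\delta)\big)\,M,\qquad M:=\int|y-m_f|\,\mu(dy);$$
dividing by $\delta$ and letting $\delta\to0$ yields, for a.e.\ $v$, the pointwise density bound $\rho(v)\ge F(v)\big(1-F(v)\big)\big/\big(\tilde c(d,n)M\big)$. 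Now I write $J_2=\bigsqcup_k(a_k,b_k)$; each endpoint lies in $J_1\cup J_3$, and a "mixed" component, with one endpoint in $J_1$ and the other in $J_3$, has length $b_k-a_k\ge\varepsilon$ since $\mathrm{dist}(J_1,J_3)=\varepsilon$. The mixed components cut $f(\Delta)$ into blocks each contained in $J_1$ or in $J_3$, with the types alternating, and when $\mu(J_1)\mu(J_3)>0$ at least one mixed component is present. Charging each ordered pair (type-$1$ block, type-$3$ block) to the mixed component immediately following the earlier of the two, and noting that the mass charged to $(a_k,b_k)$ is at most $F(a_k)\big(1-F(b_k)\big)$, one gets
$$\mu(J_1)\,\mu(J_3)\le\sum_{\text{mixed }k}F(a_k)\big(1-F(b_k)\big)\le\frac1\varepsilon\sum_{\text{mixed }k}\int_{a_k}^{b_k}F(1-F)\le\frac{\tilde c(d,n)\,M}{\varepsilon}\,\mu(J_2),$$
where the middle step uses $F(v)(1-F(v))\ge F(a_k)(1-F(b_k))$ on $[a_k,b_k]$ and $b_k-a_k\ge\varepsilon$, and the last step uses the density bound together with $\int_{a_k}^{b_k}\rho=\mu((a_k,b_k))$. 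This is exactly the needle inequality, with $c(d,n):=\tilde c(d,n)$, and Theorem~\ref{t1.5} then finishes the proof.

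I expect the combinatorial step to be the main obstacle: Corollary~\ref{c3.1} gives only a "one-cut" Cheeger estimate, and one must first convert it, via $\delta\to0$, into a genuine lower bound on the density of $\mu$, and then marry that bound to the block/gap structure of an arbitrary partition — which is precisely where the positivity of $\mathrm{dist}(J_1,J_3)$ is used, through the elementary observation that an interval of length less than $\varepsilon$ can meet at most one of $J_1,J_3$, so that the components of $J_2$ separating the two sets all have length at least $\varepsilon$. The affine normalization in the localization step (the index shift $n\mapsto n-1$ and the possible interior zero of $\alpha t+\beta$) is routine but should be spelled out carefully.
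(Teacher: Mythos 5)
Your proposal is correct in substance and uses the paper's two main ingredients — localization via Theorem \ref{t1.5} applied to $f_1=\varepsilon I_{\{f\in J_1\}}$, $f_2=I_{\{f\in J_3\}}$, $f_3=c\,I_{\{f\in J_2\}}$, $f_4=|f-m|$ (with an arbitrary constant $m$, exactly as the paper reduces to), and the one-dimensional Corollary \ref{c3.1} — but it handles the non-interval case of $J_2$ differently. The paper localizes only when $J_2$ is a single interval and then treats general $J_2$ at the level of $\mu_f=\lambda_K\circ f^{-1}$: after absorbing gaps of length $<\varepsilon$ into $J_1$ or $J_3$, it applies the interval case to each remaining gap $(a_i,b_i)$ with $(-\infty,a_i]$ and $[b_i,\infty)$ in place of $J_1,J_3$ and sums, using that every pair of points of $J_1\times J_3$ is separated by some such gap. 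You localize once for general closed $J_1,J_3$ and open $J_2$ and prove the full needle inequality directly, via the a.e.\ density bound $F(1-F)\le C M\rho$ (obtained by letting $\delta\to0$ in Corollary \ref{c3.1}) plus a block-charging argument over the mixed gaps; your charging step is essentially the paper's separation/summation argument transplanted to the needle, so both routes rest on the same observation that separating gaps have length at least $\varepsilon$. Two remarks: first, the $\delta\to0$ detour is unnecessary — applying your displayed inequality directly with $v=a_k$, $\delta=b_k-a_k\ge\varepsilon$ for each mixed gap and then charging/summing gives $\varepsilon\sum_k F(a_k)\bigl(1-F(b_k)\bigr)\le C M\mu(J_2)$ at once, with no densities involved; second, the appeal to absolute continuity of $\nu_\Delta\circ f^{-1}$ is the one small slip, since non-constancy of $f$ on $\mathbb{R}^n$ does not prevent $f$ from being constant along a needle (then the image measure is a Dirac mass); that case is trivial because the single value of $f$ lies in only one $J_i$, so the left-hand side vanishes, but it should be stated — or avoided altogether by the finite-$\delta$ version just described. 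Your outer-regularity reduction at the start is also fine, though heavier than the paper's one-line device of replacing $J_1,J_3$ by their closures and shrinking $J_2$ accordingly.
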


\begin{proof}
Obviously, it is sufficient to prove the estimate with an arbitrary number $m$ in place of $m_f$.
We will prove exactly this statement.

If we take the closures of the sets $J_1$ and $J_3$ and replace $J_2$ with
 $\mathbb{R}\setminus(J_1\cup J_3)$,
the left-hand side of the inequality does not decrease, while the right-hand side does not increase.
Hence we can assume that $J_1$ and $J_3$ are closed and $J_2$ is open.
First we consider the case where $J_2$ is an interval.
Note that our inequality can be written in the form
$$
\varepsilon\int_KI_{\{f\in J_1\}}(x)dx\int_KI_{\{f\in J_3\}}(x)dx\le c(d,n)
\int_KI_{\{f\in J_2\}}(x)dx\int_K|f(x)-m|dx.
$$
Since $J_1$ and $J_3$ are closed and $J_2$ is open, the
functions $f_1=\varepsilon I_{\{f\in J_1\}}$, $f_2=I_{\{f\in J_3\}}$ are upper semi-continuous and
the functions $f_3=c(d,n)I_{\{f\in J_2\}}$, $f_4=|f-\lambda|$ are lower semi-continuous.
Hence we can apply Theorem \ref{t1.5}.
Thus, it is sufficient to prove the inequality
$$
\varepsilon\int_s^{r}\ell(t)^{n-1}I_{\{f\in J_1\}}dt\int_s^{r}\ell(t)^{n-1}I_{\{f\in J_3\}}dt
\le c(d,n)\int_s^{r}\ell(t)^{n-1}I_{\{f\in J_2\}}dt\int_s^{r}\ell(t)^{n-1}|f-\lambda|dt,
$$
where $\ell$ is an affine function that is nonnegative on $[s, r]$.
By a linear change of variables we arrive at the inequality
$$
\varepsilon\int_s^{s+1}t^{n-1}I_{\{f\in J_1\}}dt\int_s^{s+1}t^{n-1}I_{\{f\in J_3\}}dt
\le c(d,n)\int_s^{s+1}t^{n-1}I_{\{f\in J_2\}}dt\int_s^{s+1}t^{n-1}|f-\lambda|dt,
$$
where $s\ge0$ and $f$ is some (possibly, different) polynomial.
Thus, the case where $J_2$ is an open interval follows from Corollary \ref{c3.1}.

Recall that
$\mu_f=\lambda_K\circ f^{-1}$ is the image of the uniform distribution
on the compact set $K$ under the polynomial mapping $f$.
Note that the support of the measure $\mu_f$ is a bounded set on the real line.
Let now $J_2$ be the union of countably many disjoint intervals.
The proof in this case is similar to the one from \cite{KLS}.
For completeness, we present this argument.
If there is an
interval which length
is less than $\varepsilon$, then its both end points belong either to
$J_1$ or to $J_3$ and we can add this interval to $J_1$ or $J_3$, respectively, and prove
the estimate for exactly these sets. Thus, we assume that
$J_2=\bigsqcup\limits_i(a_i,b_i)$ (the union of disjoint intervals), moreover,
$b_i-a_i\ge\varepsilon$ and there are finitely many such intervals,
since the support of the measure $\mu_f$ is bounded.
We have already proved that
$$
\varepsilon\mu_f\bigl((-\infty, a_i]\bigr)\mu_f\bigl([b_i, \infty)\bigr)\le c(d,n) \mu_f\bigl((a_i, b_i)\bigr)\alpha_f.
$$
Summing these inequalities in $i$, we obtain
$$
\sum_i\mu_f\bigl((-\infty, a_i]\bigr)\mu_f\bigl([b_i, \infty)\bigr)
\le \frac{c(d,n)\alpha_f}{\varepsilon}\mu_f(J_2).
$$
Since every point of $J_1$ and every point of $J_3$ are separated by at least one interval $(a_i, b_i)$,
we have
$$
\sum_i\mu_f\bigl((-\infty, a_i]\bigr)\mu_f\bigl([b_i, \infty)\bigr)\ge \mu_f(J_1)\mu_f(J_3),
$$
which completes the proof.
\end{proof}

\begin{corollary}[The isoperimetric inequality in Cheeger's form]\label{c3.2}
Let $K$ be a convex compact set in $\mathbb{R}^n$, let $\lambda_K$ be the normalized Lebesgue measure on $K$,
and let $f$ be a polynomial of degree $d$.
Then there is a constant $\delta(d,n)$ depending only on $d$ and $n$ such that
$$
\mu_f^+(A)\ge \frac{\delta(d,n)}{\alpha_f}\mu_f(A)\mu_f(\mathbb{R}\setminus A).
$$
\end{corollary}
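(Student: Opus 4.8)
The plan is to obtain the Cheeger inequality directly from Theorem \ref{t3.1} by choosing as the separating set $J_2$ a thin neighbourhood of the boundary of $A$ and then letting its width go to zero.

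Fix a Borel set $A\subseteq\mathbb{R}$. I would first dispose of a degenerate case: since $\mu_f(A+(-\varepsilon,\varepsilon))\ge\mu_f(\overline{A})$ for every $\varepsilon>0$, if $\mu_f(\overline{A})>\mu_f(A)$ then $\bigl(\mu_f(A+(-\varepsilon,\varepsilon))-\mu_f(A)\bigr)/\varepsilon\to+\infty$, so $\mu_f^+(A)=+\infty$ and the asserted inequality is trivial. Hence one may assume $\mu_f(\overline{A})=\mu_f(A)$.

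Next, for $\varepsilon>0$ I would set $J_1=\overline{A}$, $J_3=\{t\in\mathbb{R}\colon\operatorname{dist}(t,A)\ge\varepsilon\}$ and $J_2=\mathbb{R}\setminus(J_1\cup J_3)$. These are disjoint Borel sets covering $\mathbb{R}$ (with $J_1,J_3$ closed and $J_2$ open), the distance between $J_1$ and $J_3$ is at least $\varepsilon$ because $|x-y|\ge\operatorname{dist}(y,\overline{A})=\operatorname{dist}(y,A)\ge\varepsilon$ for $x\in J_1$, $y\in J_3$, and, since $J_3=\mathbb{R}\setminus(A+(-\varepsilon,\varepsilon))$ and $\overline{A}\cap J_3=\emptyset$, one has $\mu_f(J_2)=\mu_f(A+(-\varepsilon,\varepsilon))-\mu_f(\overline{A})=\mu_f(A+(-\varepsilon,\varepsilon))-\mu_f(A)$. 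Theorem \ref{t3.1}, applied with the true separation constant (which is $\ge\varepsilon$), then gives
$$
\varepsilon\,\mu_f(A)\,\mu_f(J_3)\le c(d,n)\,\bigl(\mu_f(A+(-\varepsilon,\varepsilon))-\mu_f(A)\bigr)\,\alpha_f .
$$

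Finally I would let $\varepsilon\downarrow 0$. The sets $J_3=J_3(\varepsilon)$ increase to $\{t\colon\operatorname{dist}(t,A)>0\}=\mathbb{R}\setminus\overline{A}$, so $\mu_f(J_3(\varepsilon))\to\mu_f(\mathbb{R}\setminus\overline{A})=1-\mu_f(A)=\mu_f(\mathbb{R}\setminus A)$. Dividing the displayed inequality by $\varepsilon$ and passing to the lower limit (the left-hand side converges, hence its limit equals its lower limit and is bounded by the lower limit of the right-hand side, which is $c(d,n)\,\alpha_f\,\mu_f^+(A)$) yields
$$
\mu_f(A)\,\mu_f(\mathbb{R}\setminus A)\le c(d,n)\,\alpha_f\,\mu_f^+(A),
$$
which is the claim with $\delta(d,n)=1/c(d,n)$. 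I do not expect any serious obstacle: the entire analytic content sits in Theorem \ref{t3.1}, and the only points requiring attention are the reduction from a general Borel set to a closed one via the infinite‑perimeter observation, the verification that $J_1,J_2,J_3$ meet the hypotheses of Theorem \ref{t3.1}, and the correct handling of the limit $\varepsilon\to 0$.
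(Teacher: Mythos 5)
Your argument is correct and is essentially the paper's own proof: the paper likewise applies Theorem \ref{t3.1} with $J_1=A$, $J_3=\mathbb{R}\setminus\bigl(A+(-\varepsilon,\varepsilon)\bigr)$, $J_2$ the remainder, and lets $\varepsilon\to0$. Your extra care (replacing $A$ by $\overline{A}$ and noting that $\mu_f(\overline{A})>\mu_f(A)$ forces $\mu_f^+(A)=+\infty$) only fills in a detail the paper leaves implicit.
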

\begin{proof}
It is sufficient to apply the previous theorem to the sets
$$J_1 = A, \ J_3= \mathbb{R} \setminus\bigl( A + (-\varepsilon, \varepsilon) \bigr), \
J_2 = \mathbb{R} \setminus (J_1 \cup J_2)$$
and let $\varepsilon$ tend to zero.
\end{proof}

As it is known (see \cite{Cheeg},
a proof can also be found in \cite{BobkDiss}),
the previous assertion implies the following result.

\begin{corollary}[The Poincar\'e inequality]\label{c3.3}
Let $K$ be a convex compact set in $\mathbb{R}^n$, let $\lambda_K$ be the normalized Lebesgue measure on $K$,
and let $f$ be a polynomial of degree $d$.
Then there is a constant $C(d,n)$ depending only on $d$ and $n$ such that,
for every smooth function $\varphi$, one has
$$
\biggl\|\varphi-\int\varphi d\mu_f\biggr\|_{L^2(\mu_f)}\le C(d,n) \alpha_f\|\varphi'\|_{L^2(\mu_f)}.
$$
\end{corollary}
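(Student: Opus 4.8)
The plan is to obtain Corollary \ref{c3.3} from the Cheeger-type isoperimetric inequality of Corollary \ref{c3.2} by the classical implication ``Cheeger inequality $\Rightarrow$ spectral gap'' (Cheeger, Maz'ya; see \cite{Cheeg, BobkDiss}), while keeping track of the constant. First I would convert Corollary \ref{c3.2} into a one-sided Cheeger bound: since $\mu_f(A)\mu_f(\mathbb{R}\setminus A)\ge\frac12\min\{\mu_f(A),\mu_f(\mathbb{R}\setminus A)\}$, every Borel set $A$ with $\mu_f(A)\le\frac12$ satisfies
$$
\mu_f^+(A)\ge h\,\mu_f(A),\qquad h:=\frac{\delta(d,n)}{2\alpha_f}.
$$
Since the support of $\mu_f$ is a bounded interval, all integrals below are finite and the mere smoothness of $\varphi$ causes no trouble; in particular $\varphi$ restricted to a neighbourhood of $\operatorname{supp}\mu_f$ is Lipschitz.

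Next I would record the coarea inequality: for every locally Lipschitz $g\ge0$ on the line,
$$
\int|g'|\,d\mu_f\ \ge\ \int_0^\infty\mu_f^+(\{g>t\})\,dt.
$$
Then I would fix a smooth $\varphi$, let $m$ be a median of $\varphi$ under $\mu_f$ (so $\mu_f(\varphi>m)\le\frac12$ and $\mu_f(\varphi<m)\le\frac12$), and set $g=(\varphi-m)_+$. The function $g$ vanishes on a set of $\mu_f$-measure at least $\frac12$, hence $\mu_f(\{g>t\})\le\frac12$ for all $t>0$, and the layer-cake formula together with the one-sided Cheeger bound gives
$$
\int g^2\,d\mu_f=\int_0^\infty 2t\,\mu_f(\{g>t\})\,dt\le h^{-1}\int_0^\infty 2t\,\mu_f^+(\{g>t\})\,dt=h^{-1}\int_0^\infty\mu_f^+(\{g^2>s\})\,ds,
$$
the last equality being the substitution $s=t^2$ together with $\{g>t\}=\{g^2>s\}$. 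Applying the coarea inequality to $g^2$ and then the Cauchy--Schwarz inequality,
$$
\int_0^\infty\mu_f^+(\{g^2>s\})\,ds\le\int|(g^2)'|\,d\mu_f=\int 2g\,|g'|\,d\mu_f\le 2\|g\|_{L^2(\mu_f)}\|g'\|_{L^2(\mu_f)},
$$
whence $\|g\|_{L^2(\mu_f)}^2\le 2h^{-1}\|g\|_{L^2(\mu_f)}\|g'\|_{L^2(\mu_f)}$, and therefore $\|g\|_{L^2(\mu_f)}\le 2h^{-1}\|g'\|_{L^2(\mu_f)}$.

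Finally I would apply the same estimate to $(\varphi-m)_-$, use that $\bigl((\varphi-m)_+\bigr)'$ and $\bigl((\varphi-m)_-\bigr)'$ have disjoint supports and are jointly dominated by $|\varphi'|$, and that $\|(\varphi-m)_+\|_{L^2(\mu_f)}^2+\|(\varphi-m)_-\|_{L^2(\mu_f)}^2=\|\varphi-m\|_{L^2(\mu_f)}^2$, to obtain $\|\varphi-m\|_{L^2(\mu_f)}\le 2h^{-1}\|\varphi'\|_{L^2(\mu_f)}$. Since $c\mapsto\|\varphi-c\|_{L^2(\mu_f)}$ is minimized at $c=\int\varphi\,d\mu_f$, the same bound holds with the mean in place of the median, and recalling $h=\delta(d,n)/(2\alpha_f)$ this is the assertion with $C(d,n)=4/\delta(d,n)$.

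The only delicate point is the coarea inequality for the perimeter $\mu_f^+$ defined through the liminf of finite differences: one must justify passing from $\liminf_{\varepsilon\to0}\varepsilon^{-1}\bigl(\mu_f(A_t+(-\varepsilon,\varepsilon))-\mu_f(A_t)\bigr)$ along the level sets $A_t=\{g>t\}$ to $\int|g'|\,d\mu_f$, which is a standard Fatou/Fubini argument valid for an arbitrary Borel probability measure on the line. All remaining steps are elementary, and since the reduction is precisely the Cheeger--Maz'ya implication already referenced in \cite{Cheeg, BobkDiss}, one could alternatively merely cite it; the computation above is included only to make the dependence $C(d,n)=4/\delta(d,n)$ explicit.
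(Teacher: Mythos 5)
Your proposal is correct: it is exactly the classical Cheeger--Maz'ya derivation of the Poincar\'e inequality from the Cheeger-type isoperimetric bound of Corollary \ref{c3.2} (median, layer-cake representation, coarea inequality for $\mu_f^+$, Cauchy--Schwarz, then replacing the median by the mean), which is precisely the implication the paper invokes by citing \cite{Cheeg} and \cite{BobkDiss}. The only difference is that you write the standard argument out and track the constant $C(d,n)=4/\delta(d,n)$, while the paper simply refers to the references.
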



\begin{thebibliography}{99}

\bibitem{ArutKos} L. M. Arutyunyan, E. D. Kosov, Estimates for integral norms of polynomials on spaces with convex measures,
Sbornik: Mathematics 206:8 (2015), 1030--1048.

\bibitem{BobkDiss}
S. G. Bobkov,  Isoperimetric Problems in the Theory of Infinite Dimensional Probability Distributions, Doctoral dissertation, Saint Petersburg (1997).

\bibitem{BobkIsop}
S. G. Bobkov, Isoperimetric and analytic inequalities for log-concave probability measures, The Annals of Probability 27:4 (1999), 1903--1921.

\bibitem{BobkPoly}
S. G. Bobkov, Remarks on the growth of $L^p$-norms of polynomials,
Geometric Aspects of Functional Analysis, Lecture Notes in Math.
1745 (2000), 27--35.

\bibitem{Bobk}
S. G. Bobkov, Some generalizations of Prokhorov's results on Khinchin-type inequalities for polynomials,
Theory Probab. Appl. 45:4 (2001), 644--647.

\bibitem{Gaus}
V. I. Bogachev, Gaussian measures, Amer. Math. Soc., Providence, Rhode Island (1998).

\bibitem{DiffMeas}
V. I. Bogachev, Differentiable measures and the Malliavin calculus, Amer. Math. Soc., Providence, Rhode Island, (2010).

\bibitem{BKNP} V. I. Bogachev, E. D. Kosov, I. Nourdin, G. Poly, Two properties of vectors of quadratic forms
in Gaussian random variables, Theory Probab. Appl. 59:2 (2015), 208--221.

\bibitem{BZ} V. I. Bogachev, G. I. Zelenov, On convergence in variation of weakly convergent multidimensional distributions,
Doklady Mathematics 91:2 (2015), 138--141.

\bibitem{Bor}
C. Borell, Convex measures on locally convex spaces,
Ark. Math. 12 (1974), 239--252.

\bibitem{Bor2}
C. Borell, Convex set functions in d-space,
Periodica Mathematica Hungarica 6:2 (1975), 111--136.

\bibitem{Bor3}
C. Borell, The Brunn--Minkowski inequality in Gauss space, Invent. Math. 30:2 (1975), 207-–216.

\bibitem{Bourg}
J. Bourgain, On the distribution of polynomials on high dimensional convex sets,
Geometric aspects of functional analysis, Lecture Notes in
Math. 1469 (1991), 127--137.

\bibitem{CarWr}
A. Carbery, J. Wright, Distributional and $L^q$ norm inequalities for polynomials
over convex bodies in $R^n$,
Math. Res. Lett. 8:3 (2001), 233--248.

\bibitem{Cheeg}
J. Cheeger, A lower bound for the smallest eigenvalue of the Laplacian,
Problems in analysis 625 (1970), 195--199.

\bibitem{FrGue} M. Fradelizi, O. Guedon, The extreme points of subsets of s-concave probabilities and a geometric localization theorem,
Discrete and Comput. Geom. 31:2 (2004), 327--335.

\bibitem{GrM1}
M. Gromov, V. Milman, Generalization of the spherical isoperimetric inequality to uniformly convex Banach spaces,
Compositio Math. 62 (1987), 263--282.

\bibitem{KLS}
R. Kannan, L. Lovasz, M. Simonovits, Isoperimetric problems for
convex bodies and a localization lemma, Discrete and Comput. Geom. 13 (1995), 541--559.

\bibitem{Klartag} B. Klartag, On convex perturbations with a bounded isotropic constant,
Geometric and Functional Analysis 16:6 (2006), 1274--1290.

\bibitem{LS}
L. Lovasz, M. Simonovits, Random walks in a convex body and an improved
volume algorithm, Random Structures and Algorithms 4:4 (1993), 359--412.

\bibitem{NSV}
Nazarov F., Sodin M., Volberg A. The geometric Kannan-Lovasz-Simonovits
lemma, dimension-free estimates for the distribution of the values of polynomials, and
the distribution of the zeros of random analytic functions,
St. Petersburg Math. 14:2 (2003),
351--366.

\bibitem{NNP}
I. Nourdin, D. Nualart, G. Poly, Absolute continuity and convergence of densities for random vectors on Wiener chaos, Electron. J. Probab 18:22 (2013), 1--19.

\bibitem{NP}
I. Nourdin, G. Poly, Convergence in total variation on Wiener chaos, Stochastic Processes and their Applications 123:2 (2013), 651--674.

\bibitem{STs}
V. N. Sudakov, B. S. Tsirel'son, Extremal properties of semi-spaces for spherically symmetrical measures,
Zap. Nauchn. Sem. LOMI 41 (1974), 14--24.

\end{thebibliography}
\end{document}